\documentclass[11pt, reqno]{amsart}
\usepackage{latexsym,amsmath,amssymb, amsthm, mathtools}
\usepackage{cases, verbatim}
\usepackage[colorlinks]{hyperref}
\usepackage{xcolor}
\usepackage{float}

\usepackage{mathscinet}

\hypersetup{
    unicode=false,          
    pdftoolbar=true,        
    pdfmenubar=true,        
    pdffitwindow=false,     
    colorlinks=false,       
    linkcolor=red,          
    linkbordercolor=red,
    citecolor=green,        
    citebordercolor=green,
    filecolor=magenta,      
    urlcolor=cyan,           
   urlbordercolor={1 1 1}  
}

\usepackage[margin=1.5in]{geometry}

\newtheorem{thm}{Theorem}
\newtheorem{lem}[thm]{Lemma}
\newtheorem{prop}[thm]{Proposition}
\newtheorem{cor}[thm]{Corollary}

\theoremstyle{remark}
\newtheorem{rmk}[thm]{Remark}
\newtheorem{example}[thm]{Example}

\theoremstyle{definition}
\newtheorem{defi}[thm]{Definition}

\numberwithin{thm}{section} 
\numberwithin{equation}{section}

\makeatletter

\newcommand{\Rmnum}[1]{\expandafter\@slowromancap\romannumeral #1@}
\makeatother

\def\R{{\mathbb R}}

\def\N{{\mathcal N}}

\newcommand{\Oba}{\overline{\Omega}}

\newcommand{\vep}{\varepsilon}
\newcommand{\ol}{\overline}

\newcommand{\bpm}{\begin{pmatrix}}
\newcommand{\epm}{\end{pmatrix}}
\newcommand{\la}{\left\langle}
\newcommand{\ra}{\right\rangle}

\newcommand{\beq}{\begin{equation}}
\newcommand{\eeq}{\end{equation}}

\def\dist{{\rm dist\,}}

plus 6pt minus 12pt
\title[Convexity for Neumann problems]{Convexity preservation for first-order nonlinear evolution equations with Neumann-type boundary conditions}

\author[D.~Lin]{Daowen Lin}
\address[Daowen Lin]{Geometric Partial Differential Equations Unit, Okinawa Institute of Science and Technology Graduate University, Okinawa 904-0495, Japan}

\email{daowen.lin@oist.jp}

\author[Q.~Liu]{Qing Liu}
\address[Qing Liu]{Geometric Partial Differential Equations Unit, Okinawa Institute of Science and Technology Graduate University, Okinawa 904-0495, Japan}
\email{qing.liu@oist.jp}

\date{}

\begin{document}

\begin{abstract}
We study preservation of spatial convexity for viscosity solutions of first order Hamilton-Jacobi evolution equations with Neumann-type boundary conditions in
bounded convex domains. Under a monotonicity assumption on the Hamiltonian in
the outward normal direction, every viscosity supersolution of the Neumann
problem is also a state-constraint supersolution. This reduces the boundary
difficulty to a state-constraint problem and enables us to prove convexity
preservation via the convex-envelope argument established by Alvarez, Lasry and Lions (1997).
We also obtain propagation estimates for semiconvexity and strong convexity
under similar structural assumptions.
\end{abstract}

\subjclass[2020]{35F21, 35B35, 35B65, 49L25}
\keywords{Hamilton-Jacobi equations, convexity preservation, Neumann boundary conditions, state constraints, viscosity solutions}

\maketitle

\section{Introduction}

In this paper, we study the convexity preserving property of viscosity solutions to a class of nonlinear evolution equations with Neumann-type boundary conditions. Suppose that $\Omega\subset \R^n$ is a bounded convex $C^2$ domain and $\nu$ denotes the outward unit normal to $\partial\Omega$. The general equation we shall consider is of the form
\begin{numcases}{}
    u_t+H(x,t,u,\nabla u)=0
    &\quad \text{in $\Omega\times(0,\infty)$,}\label{general eq1}\\
    \la \nabla u,\nu\ra=g(x)
    &\quad \text{on $\partial\Omega\times(0,\infty)$,}\label{general eq2}\\
    u(\cdot,0)=u_0
    &\quad \text{in $\Oba$,}\label{initial}
\end{numcases}
where $H:\Oba\times[0,\infty)\times\R\times\R^n\to\R$ is continuous, $g\in C^2(\partial\Omega)$, and $u_0\in C(\Oba)$ is a given initial value. Further assumptions on $H$ will be introduced below.

Suppose that there exists a unique viscosity solution $u\in C(\Oba\times[0,\infty))$
of \eqref{general eq1}--\eqref{initial}. Our main purpose is to give structural conditions under which convexity of the initial data is preserved by the evolution: if $u_0$ is convex in $\Oba$, then $u(\cdot,t)$ remains convex in $\Oba$ for all $t\geq0$. Convexity of classical and viscosity solutions has been extensively studied for elliptic and parabolic equations posed either in the whole space or with Dirichlet boundary conditions; see, for example, \cite{Kor,KaBook,Ke,GGIS,Ju} and the references therein.
Still within the settings of Cauchy problems in the whole space or Dirichlet problems, more recently the preservation and loss of power concavity for the heat equation and related parabolic equations were investigated by Chau and Weinkove \cite{CW1,CW2} and by Ishige, Salani, and Takatsu \cite{IST1, IST2, IST3}. Concavity preservation for diffusion equations is closely linked to geometric and functional inequalities; see \cite{BrLi,Borell1,Borell2,BCE,ILSBBL}. Convexity preservation for the Cauchy problem for first order Hamilton-Jacobi equations has also been studied in the more general setting of geodesic metric spaces; see \cite{LN}.

The aforementioned works motivate us to explore how boundary conditions affect such convexity or concavity properties. For second order state-constraint problems, Alvarez, Lasry, and Lions \cite{ALL} established convexity of viscosity solutions. Andrews, Clutterbuck, and Hauer \cite{ACH} and Crasta and Fragal\`a \cite{CrFr6} studied log-concavity for Robin problems associated with the Laplacian. In \cite{HL2}, convexity of level sets is established for level-set curvature flow with a dynamic boundary condition, using the game-theoretic approach of \cite{LSZ}. To the best of our knowledge, convexity preservation for nonlinear evolution equations with Neumann-type boundary conditions has not been systematically investigated. In the presence of second order terms, the interaction between diffusion and the boundary makes convexity preservation considerably more delicate. We therefore focus mainly on first order equations, while a one-dimensional second order problem is discussed separately in Section~\ref{sec:parabolic}.

Typical examples of \eqref{general eq1} that we have in mind are
\begin{equation}\label{ex1-eq}
u_t+|\nabla u|=0
\quad\text{in $\Omega\times(0,\infty)$,}
\end{equation}
and
\begin{equation}\label{ex2-eq}
u_t+\left(1+|\nabla u|^2\right)^{1/2}=0
\quad\text{in $\Omega\times(0,\infty)$.}
\end{equation}
Equation \eqref{ex1-eq} is of eikonal type, and the associated homogeneous Neumann problem 
arises naturally in deterministic optimal control for systems with reflection at the boundary. It also provides a level set formulation for the evolution of a hypersurface moving with unit normal speed and meeting $\partial\Omega$ at a right angle. Equation \eqref{ex2-eq}, together with the homogeneous Neumann boundary condition, describes the corresponding graph-type evolution. 

For first order fully nonlinear problems such as \eqref{ex1-eq} and \eqref{ex2-eq}, the boundary condition need not hold in the classical sense and is naturally interpreted in the viscosity sense. At a boundary point $(x,t)\in\partial\Omega\times(0,\infty)$, if a viscosity subsolution is touched from above by $\varphi\in C^1(\Oba\times(0,\infty))$, then one requires
\begin{equation}\label{intro-vis-boundary-sub}
\min\left\{
\varphi_t+H(x,t,u,\nabla\varphi),\
\la\nabla\varphi,\nu(x)\ra-g(x)
\right\}\leq0.
\end{equation}
For a viscosity supersolution touched from below, one requires
\begin{equation}\label{intro-vis-boundary-super}
\max\left\{
\varphi_t+H(x,t,u,\nabla\varphi),\
\la\nabla\varphi,\nu(x)\ra-g(x)
\right\}\geq0.
\end{equation}
Thus, at the boundary, either the interior equation or the Neumann condition is imposed in the viscosity sense. We refer to \cite{Lio1,I91,BL,CIL} for the viscosity theory of Hamilton-Jacobi equations with Neumann or more general oblique boundary conditions, and to \cite{IshiiWKAM,BIM} for weak KAM theory and large-time behavior in this setting. Throughout the paper, we work in a framework in which existence and comparison for \eqref{general eq1}--\eqref{initial} are available. The precise definition of viscosity solutions and a convenient set of hypotheses guaranteeing comparison will be briefly reviewed in Section~\ref{sec:prelim}.

Within this framework of viscosity solutions, we are able to investigate the convexity preserving property for \eqref{general eq1}--\eqref{initial}. Our study is inspired by the following elementary example.  

\begin{example}\label{exa1}
Let $\Omega=B_1(0)\subset\R^n$, let $g=0$, and consider \eqref{ex1-eq} with
\begin{equation}\label{ex1-sol}
u_0(x)=|x|,\qquad x\in\ol{B_1(0)}.
\end{equation}
Then
\[
u(x,t)=\max\{|x|-t,0\},
\qquad x\in\ol{B_1(0)},\quad t\geq0,
\]
is the unique viscosity solution. For $0<t<1$, the homogeneous Neumann condition is satisfied in the viscosity sense rather than in the classical sense. The function $u(\cdot,t)$ is convex for every $t\geq0$. Similar convexity-preserving examples can be constructed for \eqref{ex2-eq}.

In contrast, if we replace \eqref{ex1-eq} by
\begin{equation}\label{ex1-eq2}
u_t-|\nabla u|=0
\quad\text{in $\Omega\times(0,\infty)$}
\end{equation}
with the same domain, initial data, and homogeneous Neumann condition, then the unique viscosity solution is
\begin{equation}\label{ex1-sol2}
u(x,t)=\min\{|x|+t,1\},
\qquad x\in\ol{B_1(0)},\quad t\geq0,
\end{equation}
which is not convex in $x$ for any $0<t<1$.
\end{example}

The key difference between \eqref{ex1-eq} and \eqref{ex1-eq2} is their behavior at the boundary. For the first equation, the solution in Example~\ref{exa1} satisfies the supersolution inequality for the interior equation even at $\partial\Omega$, whereas this property fails for \eqref{ex1-eq2}. This suggests that, in order to preserve convexity, it is useful to require that the Neumann solution also satisfy the state-constraint supersolution condition
\begin{equation}\label{sc-form}
u_t+H(x,t,u,\nabla u)\geq0
\quad\text{on $\Oba\times(0,\infty)$}
\end{equation}
in the viscosity sense. In this case, we also say that $u$ is a constrained viscosity supersolution. Once \eqref{sc-form} is available, the convex-envelope argument for state-constraint problems developed by Alvarez, Lasry and Lions \cite{ALL} can be adapted to the present time-dependent setting.

A main observation of this paper is that the state-constraint property follows directly from the Neumann problem under a natural monotonicity condition on the Hamiltonian in the outward normal direction. More precisely, we impose the following assumptions:
\begin{enumerate}
\item[(H1)] There exists $\gamma\in\R$ such that
\[
H(x,t,r_1,p)-H(x,t,r_2,p)\geq\gamma(r_1-r_2)
\]
for all $x\in\Oba$, $t\geq0$, $p\in\R^n$ and $r_1,r_2\in\R$ satisfying $r_1\geq r_2$.

\item[(H2)] There exists a modulus of continuity $\omega_x$ such that
\[
|H(x_1,t,r,p)-H(x_2,t,r,p)|
\leq\omega_x\bigl(|x_1-x_2|(1+|p|)\bigr)
\]
for all $x_1,x_2\in\Oba$, $t\geq0$, $r\in\R$ and $p\in\R^n$.

\item[(H3)] There exists a modulus of continuity $\omega_p$ such that
\[
|H(x,t,r,p_1)-H(x,t,r,p_2)|
\leq\omega_p(|p_1-p_2|)
\]
for all $x\in\Oba$, $t\geq0$, $r\in\R$ and $p_1,p_2\in\R^n$.

\item[(H4)] For each $(t,p)\in[0,\infty)\times\R^n$, the map $(x,r)\longmapsto H(x,t,r,p)$ is concave. More precisely,
\begin{equation}\label{H-concave0}
\sum_{i=1}^{n+1}\lambda_i H(x_i,t,r_i,p)
\leq H(x,t,r,p)
\end{equation}
whenever
\begin{equation}\label{H-concave2}
x=\sum_{i=1}^{n+1}\lambda_i x_i,
\qquad
r=\sum_{i=1}^{n+1}\lambda_i r_i,
\end{equation}
where $x_i\in\Oba$, $r_i\in\R$, $\lambda_i\geq0$ and
$\sum_{i=1}^{n+1}\lambda_i=1$.

\item[(H5)] For every $(x,t,r)\in\partial\Omega\times(0,\infty)\times\R$, every $p\in\R^n$ satisfying $\la p,\nu(x)\ra\geq g(x)$,
and every $\alpha\geq0$, we have 
\begin{equation}\label{H-mono1}
H(x,t,r,p+\alpha\nu(x))
\geq H(x,t,r,p).
\end{equation}
\end{enumerate}

The assumptions (H1)--(H3) are standard structural conditions used in comparison arguments for viscosity solutions. Condition (H4) is  standard in studying convexity of solutions to fully nonlinear equations, used also in the convex-envelope method \cite{ALL}; see related results in \cite{GGIS, Ju, ILS}. The new boundary condition (H5) has a simple geometric meaning: once the normal component of the gradient reaches the Neumann threshold $g(x)$, the Hamiltonian does not decrease when the gradient is moved further in the outward normal direction. For homogeneous Neumann data $g=0$, condition (H5) is satisfied, for example, by Hamiltonians of the form
\[
H(x,t,r,p)=\widetilde H(x,t,r,|p|)
\]
for which $q\mapsto\widetilde H(x,t,r,q)$ is nondecreasing. Indeed, if $\la p,\nu\ra\geq0$, then
\[
|p+\alpha\nu|\geq|p|
\qquad\text{for every }\alpha\geq0.
\]
The same observation applies when $g\geq0$.

Our first main result is the following.

\begin{thm}[Convexity preserving property]\label{thm:convex0}
Let $\Omega\subset\R^n$ be a bounded convex domain of $C^2$ class with outward unit normal $\nu$. Suppose that
$H\in C(\Oba\times[0,\infty)\times\R\times\R^n)$ satisfies
(H1)--(H5). Let $u_0\in C(\Oba)$ be convex and let
$u\in C(\Oba\times[0,\infty))$ be a viscosity solution of
\eqref{general eq1}--\eqref{initial}. Then $u(\cdot,t)$ is convex in
$\Oba$ for every $t\geq0$.
\end{thm}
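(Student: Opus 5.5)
The plan has two stages: first upgrade the Neumann supersolution property to a state-constraint supersolution property using (H5), and then run the Alvarez--Lasry--Lions convex-envelope argument in the time-dependent setting, closing with comparison.

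\textbf{Step 1: $u$ is a constrained viscosity supersolution, i.e.\ \eqref{sc-form} holds on $\Oba\times(0,\infty)$.} Inside $\Omega$ there is nothing to prove. Fix $(x_0,t_0)\in\partial\Omega\times(0,\infty)$ and a test function $\varphi\in C^1$ touching $u$ from below at $(x_0,t_0)$, strictly. If $\la\nabla\varphi(x_0,t_0),\nu(x_0)\ra<g(x_0)$, then \eqref{intro-vis-boundary-super} already gives $\varphi_t+H\geq0$. Otherwise $\la\nabla\varphi,\nu\ra\geq g$, and we perturb the test function to
\[
\varphi_\vep(x,t)=\varphi(x,t)-\vep\, d(x),
\]
where $d$ is a $C^2$ extension of the distance to $\partial\Omega$ that is negative inside, so that $\nabla d=\nu$ on $\partial\Omega$. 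More robustly, one can subtract $\alpha\, d(x)+\vep|x-x_0|^2+\vep|t-t_0|^2$ for any $\alpha\ge0$. The test function $\varphi+\alpha(-d)$ still lies below $u$ near $(x_0,t_0)$ in $\Oba$, since $-d\geq0$ there, and its normal derivative is $\la\nabla\varphi,\nu\ra-\alpha$. Choose $\alpha$ slightly larger than $\la\nabla\varphi,\nu\ra-g$, so that the Neumann inequality fails. Minimizers of $u-\varphi+\alpha d$ then converge to $(x_0,t_0)$ as the penalization shrinks. At boundary minimizers the Neumann alternative fails, so the equation must hold; at interior minimizers it holds directly. Passing to the limit by continuity of $H$ gives
\[
\varphi_t+H\bigl(x_0,t_0,u,\nabla\varphi-\beta\nu\bigr)\geq0,
\]
with $\beta=\la\nabla\varphi,\nu\ra-g\geq0$. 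Set $p=\nabla\varphi-\beta\nu$, which satisfies $\la p,\nu\ra=g$. Applying (H5) with this $p$ and $\alpha=\beta$ yields $H(x_0,t_0,u,\nabla\varphi)\geq H(x_0,t_0,u,p)$, and hence $\varphi_t+H(x_0,t_0,u,\nabla\varphi)\geq0$. This is precisely where (H5) enters.

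\textbf{Step 2: the convex envelope is a subsolution of the Neumann problem.} Let $w(\cdot,t)$ be the spatial convex envelope of $u(\cdot,t)$ on $\Oba$:
\[
w(x,t)=\inf\Bigl\{\textstyle\sum\lambda_iu(x_i,t):\ \sum\lambda_ix_i=x\Bigr\},
\]
with at most $n+1$ points by Carathéodory. Then $w\leq u$, $w$ is continuous, and $w(\cdot,0)=u_0$ by convexity of $u_0$. We show $w$ is a viscosity subsolution of \eqref{general eq1}--\eqref{general eq2}, following \cite{ALL}.

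Suppose $\psi$ touches $w$ from above at $(\hat x,\hat t)$, with $w(\hat x,\hat t)=\sum\lambda_iu(x_i,\hat t)$. The standard argument shows that each $u(\cdot,\cdot)$ is touched from below at $(x_i,\hat t)$ by a test function with the same spatial gradient $p=\nabla\psi(\hat x,\hat t)$ and time derivatives $a_i$ satisfying $\sum\lambda_ia_i=\psi_t$. This requires a doubling/perturbation in time, done rigorously with sup-convolution or the ALL lemma. The points $x_i$ may lie on $\partial\Omega$, which is why Step~1 is needed: it gives $a_i+H(x_i,\hat t,u(x_i,\hat t),p)\geq0$ for all $i$ regardless of location. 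Averaging with weights $\lambda_i$ and using (H4) gives
\[
\psi_t+H(\hat x,\hat t,w,p)\geq0.
\]
This is the wrong direction for a subsolution, so the sign convention has to be tracked. I expect the correct pairing to be that the convex envelope is a \emph{supersolution} candidate and $u$ the subsolution. Convexity of $u$ then amounts to $u\leq w$, which is the nontrivial inequality and comes from comparison: the envelope of supersolution data with a concave $H$ remains a supersolution, as in ALL, who apply it to supersolutions. So $w$ should be shown to be a viscosity supersolution of the Neumann problem. At boundary points $\hat x\in\partial\Omega$, a supersolution only needs the max in \eqref{intro-vis-boundary-super}, and we have shown the stronger statement that the equation part holds. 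Hence $w$ is a Neumann supersolution with $w(\cdot,0)=u_0$.

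\textbf{Step 3: comparison.} The comparison principle reviewed in Section~\ref{sec:prelim} applies under (H1)--(H3) on a $C^2$ domain with $g\in C^2$. Comparing the subsolution $u$ with the supersolution $w$ gives $u\leq w$, hence $u=w$, so $u(\cdot,t)$ is convex.

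\textbf{Main obstacle.} I expect the main obstacle to be Step~2 in the time-dependent setting. Transferring the upper test function of $w$ to lower test functions of $u$ at the points $x_i$, with matching gradients and averaged time derivatives, requires care: these points may degenerate (coalesce, or $\lambda_i\to0$) or lie on $\partial\Omega$. I would first regularize $u$ by inf-convolution in $(x,t)$ restricted to $\Oba$, so that it becomes semiconcave, and control the error using (H2)--(H3).
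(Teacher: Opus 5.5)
Your overall architecture is the paper's: Neumann-to-state-constraint as in Lemma~\ref{lem:constraint}, then the envelope supersolution property of Proposition~\ref{prop:envelope0}, then comparison. Steps~2--3 are fine once you settle on the supersolution direction.

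\textbf{The gap is in Step~1.} Step~1 is the key new ingredient, and your perturbation argument fails; its intermediate conclusion is actually false.
\begin{itemize}
\item \emph{Sign error.} Since $-d\geq0$ in $\Oba$, the function $\varphi-\alpha d$ lies \emph{above} $\varphi$, so it need not stay below $u$. With $\alpha$ bounded away from $0$, the minimizers of $u-\varphi+\alpha d$ need not approach $(x_0,t_0)$.
\item \emph{Counterexample.} Use Example~\ref{exa1}: $\Omega=B_1(0)$, $g=0$, $H(p)=|p|$, and $u=|x|-t$ near $\partial\Omega$ for small $t$. Take $|x_0|=1$, $t_0$ small, and $\varphi=|x|-t-|x-x_0|^2-|t-t_0|^2$. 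Along $x=(1-s)x_0$, $t=t_0$, one gets $u-\varphi+\alpha d=s^2-\alpha s$, which is minimized at $s=\alpha/2\approx1/2$, not at $s=0$.
\item \emph{False limiting inequality.} In the same example $\beta=\langle\nabla\varphi,\nu\rangle-g=1$, so $p=\nabla\varphi-\beta\nu=0$. Your limiting inequality would then say $\varphi_t+H(p)=-1\geq0$, which is false, even though $u$ is a genuine state-constraint supersolution.
\item \emph{Structural reason.} At a boundary point the relative subjet is stable under adding $c\nu$ with $c\geq0$, but not under subtracting it. The perturbation that does stay below $u$ is $\varphi+\alpha d$, and it \emph{raises} the normal derivative. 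So you cannot force the Neumann alternative to fail at the same point.
\end{itemize}

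\textbf{The missing idea: double variables to move to nearby points.} The paper minimizes $u(x,t)-\varphi(y,t)+\frac{1}{\vep}|x-y|^2-G_\vep(x)+G_\vep(y)$. Here $G_\vep$ is a boundary-layer function with $\nabla G_\vep=(g-\vep)\nu$ on $\partial\Omega$, $|\nabla G_\vep|\leq C$ and $|\nabla^2G_\vep|\leq C\vep^{-1/2}$.
\begin{itemize}
\item \emph{The $x$-side.} Convexity of $\Omega$ gives $\langle y_\vep-x_\vep,\nu(x_\vep)\rangle\leq0$. Hence at a boundary point $x_\vep$ the Neumann alternative fails and the equation holds there.
\item \emph{The $y$-side.} If $y_\vep\in\partial\Omega$, the maximum of $\varphi(\cdot,t_\vep)-\eta$ gives $\nabla\varphi(y_\vep,t_\vep)=\nabla\eta(y_\vep)+c_\vep\nu(y_\vep)$. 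Here $c_\vep\geq0$ and $\langle\nabla\eta(y_\vep),\nu(y_\vep)\rangle\geq g(y_\vep)-\vep$.
\item \emph{The limit.} One obtains $\nabla\varphi(\hat x,\hat t)=q+c\nu(\hat x)$ with $c\geq0$, $\langle q,\nu\rangle\geq g$ and $\varphi_t+H(q)\geq0$. Then (H5) is applied to this $q$.
\end{itemize}
The vector $q$ is produced by the limit; it is not prescribed as $\nabla\varphi-\beta\nu$ with $\langle q,\nu\rangle=g$. In the example above, $q=\nu$ and $c=0$.

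\textbf{A minor point on Step~2.} The paper fixes weights $\lambda_i>0$ in the envelope and uses that $u$ equals its $\lambda$-envelope for every such $\lambda$. This removes the degeneration you worry about ($\lambda_i\to0$, coalescing points). The first-order parabolic Crandall--Ishii lemma, combined with the closed-subjet form \eqref{sc-equiv}, then handles boundary points $\hat y_i$ directly, with no inf-convolution needed.
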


The key step is the Neumann-to-state-constraint principle. In Lemma~\ref{lem:constraint}, we show that, under (H5), every viscosity supersolution of the Neumann problem automatically satisfies \eqref{sc-form}. Once this is established, we can adapt the convex-envelope argument of \cite{ALL}. The original result in \cite{ALL} concerns the stationary problem; here we apply the same mechanism to first order evolution equations. Example~\ref{exa1} also shows why (H5) is a natural condition: it is satisfied by the Hamiltonian $H(p)=|p|$, but not by $H(p)=-|p|$. To the best of our knowledge, spatial convexity preservation for first order evolutionary Hamilton-Jacobi equations with Neumann boundary conditions has not been established previously. Although the argument becomes relatively direct once the Neumann-to-state-constraint conversion is available, we believe that this observation is worth recording, both in its own right and as a possible starting point for further developments.

Note that {\rm (H5)} provides a convenient sufficient condition for convexity preservation that connects to the state constraint problem and is independent of the initial data. In general, depend on the
particular initial value, it need not be necessary; see Example~\ref{exa:no-state-constraint}. It would
be an interesting future project to develop a more general theory in which {\rm (H5)} is weakened and structural conditions on the
initial data are incorporated.

We also study quantitative versions of convexity preservation under similar assumptions. For $C\in\R$, we say that $f\in C(\Oba)$ is $C$-convex if
\[
x\longmapsto f(x)-\frac C2|x|^2
\]
is convex in $\Oba$. Thus $C=0$ corresponds to ordinary convexity, $C>0$ to strong convexity, and $C<0$ to semiconvexity. For Hamiltonians independent of $u$, we use the following quantitative version of (H4):
\begin{enumerate}
\item[(H4$'$)] There exist $T>0$ and a continuous function $m:\R\to\R$ such that
\begin{equation}\label{H-concave1}
\sum_{i=1}^{n+1}\lambda_i
H(x_i,t,p+s(x_i-x))
\leq H(x,t,p)+m(s)Q
\end{equation}
for all $s\in\R$, $x_1,\ldots,x_{n+1}\in\Oba$, $t\in[0,T)$ and
$p\in\R^n$, where
\[
x=\sum_{i=1}^{n+1}\lambda_i x_i,
\qquad
Q=\sum_{i=1}^{n+1}\lambda_i|x_i|^2-|x|^2,
\qquad
\lambda_i\geq0,\quad \sum_{i=1}^{n+1}\lambda_i=1.
\]
\end{enumerate}

\begin{thm}[Propagation of generalized convexity]\label{thm:convex}
Let $\Omega\subset\R^n$ be a bounded convex domain of $C^2$ class with outward unit normal $\nu$. Suppose that the Hamiltonian is independent of $u$ and that
$H\in C(\Oba\times[0,\infty)\times\R^n)$ satisfies
(H2), (H3), (H4$'$), and (H5). Let $\mu\in C^1([0,T))$ satisfy
\begin{equation}\label{g-fun}
\frac12\mu'(t)+m(\mu(t))\leq0,
\qquad t\in(0,T).
\end{equation}
If $u_0\in C(\Oba)$ is $\mu(0)$-convex and
$u\in C(\Oba\times[0,T))$ is a viscosity solution of
\eqref{general eq1}--\eqref{initial}, then $u(\cdot,t)$ is
$\mu(t)$-convex in $\Oba$ for every $t\in[0,T)$.
\end{thm}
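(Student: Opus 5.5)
The plan is to run the convex-envelope argument of \cite{ALL} on the shifted function
\[
v(x,t)=u(x,t)-\frac{\mu(t)}{2}|x|^2,
\]
after first converting the Neumann supersolution property of $u$ into a state-constraint one. By Lemma~\ref{lem:constraint} (which uses (H5)), $u$ is a constrained viscosity supersolution, so \eqref{sc-form} holds on $\Oba\times(0,T)$. Now take the spatial convex envelope $w(\cdot,t)$ of $u(\cdot,t)-\frac{\mu(t)}{2}|x|^2$ over $\Oba$, and put $U(x,t)=w(x,t)+\frac{\mu(t)}{2}|x|^2$. Then $U\le u$, $U(\cdot,t)$ is $\mu(t)$-convex, and $U(\cdot,0)=u_0$ because $u_0$ is $\mu(0)$-convex. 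If $U$ is a viscosity subsolution of the Neumann problem, the comparison principle from Section~\ref{sec:prelim} gives $U\ge u$. Hence $u=U$ is $\mu(t)$-convex. Continuity of $U$ up to $\Oba$ should follow from continuity of $u$ and compactness, using Carathéodory representations with $n+1$ points.

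The core step is the subsolution property of $U$ at a point $(x,t)$ where $U-\varphi$ attains a strict local maximum. Write $w(x,t)=\sum\lambda_i v(x_i,t)$ with $x=\sum\lambda_ix_i$ and $x_i\in\Oba$. The points $x_i$ may lie on $\partial\Omega$ even when $x\in\Omega$; this is why the state-constraint property \eqref{sc-form} is needed there. Following \cite{ALL}, the test function $\varphi$ transfers to test functions touching $u$ from below at each $(x_i,t)$, of the form
\[
\varphi_i(y,s)=\varphi\bigl(x+(y-x_i),s\bigr)+\frac{\mu(s)}{2}\bigl(|y|^2-|x+y-x_i|^2\bigr),
\]
modulo lower-order adjustments. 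The gradient is $\nabla\varphi_i(x_i,t)=p+\mu(t)(x_i-x)$, where $p=\nabla\varphi(x,t)-\mu x+\mu x=\nabla\varphi(x,t)$ after bookkeeping. The time derivatives satisfy $\sum\lambda_i\partial_t\varphi_i=\varphi_t-\frac12\mu'Q$ up to sign conventions.

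Because $u$ is only continuous, this transfer cannot be done pointwise. I would use the standard doubling or sup-convolution regularisation, or the parabolic version of the Alvarez--Lasry--Lions lemma, obtaining supersolution inequalities $\tau_i+H(x_i,t,p+\mu(x_i-x))\ge0$ with $\sum\lambda_i\tau_i\le\varphi_t+\frac12\mu'Q$. This is where (H2)--(H3) control the errors. Combining these with (H4$'$) at $s=\mu(t)$ gives
\[
0\le\sum\lambda_i\bigl(\tau_i+H(x_i,t,p+\mu(x_i-x))\bigr)\le \varphi_t+H(x,t,p)+\Bigl(\tfrac12\mu'+m(\mu)\Bigr)Q,
\]
with the inequality signs arranged so that \eqref{g-fun} yields the subsolution inequality for $U$. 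I would carefully check this sign convention against the reduction $v_t+\frac12\mu'|x|^2$.

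At boundary points $x\in\partial\Omega$ the subsolution condition \eqref{intro-vis-boundary-sub} only requires the minimum. It suffices either to prove the interior inequality as above, which works since all $x_i$ are in $\Oba$, or to note that convexity of $\Omega$ forces every $x_i$ into the face containing $x$. I expect the main obstacle to be the rigorous transfer of test functions from the envelope to $u$ at boundary points $x_i$ in the time-dependent setting. The degenerate cases, where some $\lambda_i=0$ or the $x_i$ coincide, and the lack of differentiability in $t$ also have to be handled. I would treat all of this by an approximation: replace $u$ by its inf-convolution in $(x,t)$, restricted to $\Oba$, which remains a constrained supersolution up to small errors thanks to (H2)--(H3), and then pass to the limit by stability.
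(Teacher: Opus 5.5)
Your outline follows the paper's proof: Lemma~\ref{lem:constraint}, a $\mu(t)$-shifted convex envelope, the first-order theorem of sums at the minimizing points, (H4$'$) with $s=\mu(t)$ and \eqref{g-fun}, then comparison. However, the central step is stated in the wrong direction and, as written, fails. $U$ is an infimum with $U\le u$, so comparison yields the nontrivial inequality $U\ge u$ only if $U$ is a \emph{supersolution} and $u$ the subsolution. If $U$ were a subsolution, comparison with the supersolution $u$ would only return $U\le u$, which you already have.

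Nor can you test $U$ at a strict local maximum of $U-\varphi$. There $\varphi$ and $\sum_i\lambda_i u(y_i,s)-\frac{\mu(s)}{2}Q$ are both upper bounds for $U$, so nothing relates $\varphi$ to $u$. The transfer to $u$ works only at a minimum of $U-\varphi$. After normalizing $\varphi(x,t)=U(x,t)$, one then has $\varphi(\sum_i\lambda_iy_i,s)\le U(\sum_i\lambda_iy_i,s)\le\sum_i\lambda_iu(y_i,s)-\frac{\mu(s)}{2}Q$, with equality at $(x_1,\dots,x_{n+1},t)$. This is the function $\Phi$ in Proposition~\ref{prop:envelope}.

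The computation you actually sketch is this supersolution argument: test functions touching $u$ from below, inequalities $\tau_i+H\ge0$, and the conclusion $\varphi_t+H(x,t,p)\ge0$. So it contradicts your own labels. At $x\in\partial\Omega$ the relevant alternative is likewise \eqref{intro-vis-boundary-super}, not \eqref{intro-vis-boundary-sub}. Proving the interior inequality on all of $\Oba$ makes $U$ a constrained supersolution, hence a Neumann supersolution, and comparison then gives $u\le U$. If you replace ``subsolution/maximum'' by ``supersolution/minimum'' throughout, your argument becomes the paper's.

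Three smaller remarks.
\begin{itemize}
\item Your compactness argument yields only lower semicontinuity of $U$. That is all the supersolution property and comparison require, so the continuity claim is neither established nor needed.
\item Drop the inf-convolution fallback. Take a boundary point $x_0$ whose minimizer $y_0$ lies in $\Omega$, and set $q=(x_0-y_0)/\vep$. Relative subjets of the inf-convolution at $x_0$ have spatial part $q+c\nu(x_0)$ with $c\ge0$, while $u$ only supplies the inequality at $q$. Bridging this needs normal monotonicity of $H$, not (H2)--(H3). The first-order parabolic theorem of sums with relative subjets (\cite[Theorem~8.3]{CIL}) gives $\bigl(b_i/\lambda_i,\nabla\varphi+\mu(\hat y_i-\hat x)\bigr)\in\overline{\mathcal P}^{\,1,-}_{\Oba}u(\hat y_i,\hat t)$ with $\sum_ib_i=\varphi_t+\frac{\mu'}{2}Q$ directly, including at $\hat y_i\in\partial\Omega$.
\item Taking the infimum over $\lambda$ as well is harmless: fix an optimal $\lambda$ at the contact point and drop zero weights. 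The paper instead fixes $\lambda$ with all $\lambda_i>0$ and uses the arbitrariness of $\lambda$ at the end.
\end{itemize}
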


Theorem~\ref{thm:convex} simultaneously gives propagation of semiconvexity and strong convexity. If $\mu(0)<0$ and \eqref{g-fun} admits a solution that remains finite from below, then $\mu(t)$ gives a quantitative semiconvexity bound. If $\mu(0)>0$ and one can choose $\mu(t)>0$, then strong convexity is propagated. More general Hamiltonians depending on $u$ can also be treated under a suitable variant of (H4$'$); see Section~\ref{sec:convexity}.

Semiconvexity and semiconcavity estimates for Hamilton-Jacobi equations in the Cauchy problem have been studied extensively. As discussed in \cite{Lions82,CS}, two classical approaches are vanishing viscosity, combined with Hessian estimates for the regularized equation, and explicit representation formulas such as the Hopf-Lax formula. We refer, for example, to \cite{Fujita05, Stromberg10} for Hessian estimates for viscous Hamilton-Jacobi equations. A game-theoretic approach to semiconvexity for fully nonlinear evolution equations was developed in \cite{LiuSemiconvexity}, and recent one-sided second order estimates based on nonlinear adjoint methods can be found in \cite{CGM}.

Much less seems to be known in the presence of boundary conditions. For state-constraint problems, Han \cite{HanSemiconcavity} recently established global semiconcavity for a class of first order Hamilton-Jacobi equations using optimal-control and weak KAM methods. For Neumann problems, Mitake and Ni studied quantitative homogenization in \cite{MNhom} and semiconcavity estimates in \cite{MNsemi}. Goffi \cite{GoffiNeumann} obtained quantitative vanishing-viscosity estimates for time-dependent Hamilton-Jacobi equations with homogeneous Neumann boundary conditions on convex domains, including one-sided second order estimates. Related geometric estimates for the heat equation with homogeneous Neumann boundary conditions were obtained in \cite{GoffiTralli}. These works are complementary to the present paper. Our focus here is the preservation of convexity, semiconvexity, and strong convexity for first order Hamilton-Jacobi equations with Neumann-type boundary conditions.

The paper is organized as follows. In Section~\ref{sec:prelim}, we recall the viscosity formulations of the Neumann and state-constraint problems. In Section~\ref{sec:neumann-state}, we prove the Neumann-to-state-constraint property under the outward-normal monotonicity assumption on the Hamiltonian. In Section~\ref{sec:convexity}, we establish a generalized convex-envelope argument and derive the preservation of convexity, semiconvexity, and strong convexity. We also discuss nonhomogeneous Neumann conditions and a logarithmic transformation leading to Robin-type boundary conditions. Finally, in Section~\ref{sec:discussion}, we present several further observations and possible extensions. These include a data-dependent convexity-preservation result for first order Neumann problems under a relaxed version of the monotonicity assumption (H5) in Section~\ref{sec:data-dependent}, and a discussion of second order parabolic equations in one space dimension in  Section~\ref{sec:parabolic}. 

\subsection*{Acknowledgments}
The authors are grateful to Paolo Salani for helpful discussions.

\section{Preliminaries}\label{sec:prelim}

In this section, we briefly review some basics about viscosity solutions of \eqref{general eq1} with a general boundary condition
\begin{equation}\label{gen bdry}
B(x, t, u, \nabla u)=0 \quad \text{on $\partial\Omega \times (0, \infty)$,}
\end{equation}
where $B: \partial \Omega\times (0, \infty)\times \R\times \R^n\to \R$ is continuous and 
\begin{equation}\label{bdry-mono}
 \alpha \mapsto B(x, t, r, p+\alpha \nu(x)) \quad\text{is strictly increasing in $\R$.}
\end{equation}
Here, $\Omega\subset \R^n$ is a bounded domain of $C^2$ class. We assume that $H: \Oba\times [0, \infty)\times \R\times \R^n\to \R$ is continuous and satisfies (H1)--(H3). We also denote by $USC(\Oba)$ and $LSC(\Oba)$ the sets of upper semicontinuous and lower semicontinous functions on $\Oba$, respectively. 

\begin{defi}\label{def:neumann}
(1) A locally bounded function $u\in USC(\Oba\times (0, \infty))$ is called a viscosity subsolution of \eqref{general eq1}\eqref{gen bdry} if whenever there exist $(x_0, t_0)\in \Oba\times (0, \infty)$ and $\varphi\in C^1(\Oba\times (0, \infty))$ such that $u-\varphi$ attains a local maximum in $\Oba\times (0, \infty)$ at $(x_0, t_0)$, the following conditions hold:
\begin{enumerate}
    \item[1)] If $x_0\in \Omega$, then 
    \begin{equation}\label{vis-sub1}
        \varphi_t(x_0, t_0)+H(x_0, t_0, u(x_0, t_0), \nabla \varphi(x_0, t_0))\leq 0.
    \end{equation}
    \item[2)] If $x_0\in \partial \Omega$, then either \eqref{vis-sub1} holds or
    \begin{equation}\label{vis-sub2}
       B(x_0, t_0, u(x_0, t_0), \nabla \varphi(x_0, t_0))\leq 0
    \end{equation}
\end{enumerate}
(2) A locally bounded function $u\in LSC(\Oba\times (0, \infty))$ is called a viscosity supersolution of \eqref{general eq1}\eqref{gen bdry} if whenever there exist $(x_0, t_0)\in \Oba\times (0, \infty)$ and $\varphi\in C^1(\Oba\times (0, \infty))$ such that $u-\varphi$ attains a local minimum in $\Oba\times (0, \infty)$ at $(x_0, t_0)$, the following conditions hold:
\begin{enumerate}
    \item[1)] If $x_0\in \Omega$, then 
    \begin{equation}\label{vis-super1}
        \varphi_t(x_0, t_0)+H(x_0, t_0, u(x_0, t_0), \nabla \varphi(x_0, t_0))\geq 0.
    \end{equation}
    \item[2)] If $x_0\in \partial \Omega$, then either \eqref{vis-super1} holds or
    \begin{equation}\label{vis-super2}
    B(x_0, t_0, u(x_0, t_0), \nabla \varphi(x_0, t_0))\geq 0
    \end{equation}   
\end{enumerate}
(3) A function $u\in C(\Oba\times (0, \infty))$ is called a viscosity solution of \eqref{general eq1}\eqref{gen bdry} if it is both a viscosity subsolution and a viscosity supersolution of \eqref{general eq1}\eqref{gen bdry}. If in addition $u\in C(\Oba\times [0, \infty))$ and satisfies \eqref{initial} with $u_0\in C(\Oba)$, then $u$ is called a viscosity solution of \eqref{general eq1}, \eqref{initial} and \eqref{gen bdry}.
\end{defi}

As is standard in viscosity solution theory, in the above definition one may replace the local maximum (resp., local minimum) of $u-\varphi$ by a global maximum (resp., global minimum), or by a strict global maximum (resp., strict global minimum).

It is standard that the definition of viscosity solutions can equivalently
be formulated in terms of semijets \cite{CIL}. Recall that 
for $u\in USC(\overline\Omega\times(0,T))$ upper semicontinuous, the superjet $\mathcal P_{\overline\Omega}^{1,+}u(x,t)$ of $u$ at $(x,t)$ relative to $\overline\Omega$ is 
\[
\mathcal P_{\overline\Omega}^{1,+}u(x,t)
=
\left\{
(a,p)\in\mathbb R\times\mathbb R^n:
\begin{array}{l}
u(y,s)\leq u(x,t)+a(s-t)+\langle p,y-x\rangle\\[1mm]
\qquad\qquad +\,o(|s-t|+|y-x|)
\end{array}
\right\},
\]
where $(y,s)\to(x,t)$ with
$(y,s)\in\overline\Omega\times(0,T)$.  For 
$u\in LSC(\overline\Omega\times(0,T))$, the subjet $\mathcal P_{\overline\Omega}^{1,-}u(x,t)$ is
defined by reversing the inequality.  Equivalently,
\[
(a,p)\in\mathcal P_{\overline\Omega}^{1,-}u(x,t)
\quad \text{if and only if }\quad 
(-a,-p)\in\mathcal P_{\overline\Omega}^{1,+}(-u)(x,t).
\]
We denote by
$\overline{\mathcal P}_{\overline\Omega}^{1,+}u(x,t)$
the closure of the superjet in the following sense:
$(a,p)\in
\overline{\mathcal P}_{\overline\Omega}^{1,+}u(x,t)$
if there exist sequences $(x_j,t_j)\in\overline\Omega\times(0,T), (a_j,p_j)\in
\mathcal P_{\overline\Omega}^{1,+}u(x_j,t_j)$ 
such that
\[
(x_j,t_j,u(x_j,t_j),a_j,p_j)
\to 
(x,t,u(x,t),a,p)
\]
as $j\to\infty$.
Similarly,
$(a,p)\in
\overline{\mathcal P}_{\overline\Omega}^{1,-}u(x,t)$
if there exist $(x_j,t_j)\in\overline\Omega\times(0,T), 
(a_j,p_j)\in
\mathcal P_{\overline\Omega}^{1,-}u(x_j,t_j)$ such that
\[
(x_j,t_j,u(x_j,t_j),a_j,p_j)
\to
(x,t,u(x,t),a,p).
\]
By the continuity of the equation and the boundary operator, the
viscosity inequalities in Definition~\ref{def:neumann} may equivalently
be imposed on the corresponding closed semijets. In particular, if $u$
is a viscosity supersolution, the supersolution inequalities remain
valid for every $(a,p)\in
\overline{\mathcal P}_{\overline\Omega}^{1,-}u(x,t)$,
with the usual viscosity boundary alternative when 
$x\in\partial\Omega$. The analogous statement holds for
subsolutions and
$\overline{\mathcal P}_{\overline\Omega}^{1,+}u(x,t)$.
See \cite{CIL}.

We shall use the standard comparison principle for the Cauchy--Neumann problem. Under the standing assumptions of this paper, namely that $\Omega$ is a
bounded $C^2$ domain, $g$ is sufficiently regular, and $H$ satisfies
(H1)--(H3), the comparison principle for
\eqref{general eq1}--\eqref{general eq2} follows from the standard
viscosity theory for Hamilton-Jacobi equations with Neumann boundary
conditions; see, for example, \cite{Lio1,BL,CIL}.

We emphasize that (H1)--(H3) are only one convenient sufficient framework for
comparison.  Other comparison theories may be used instead, such as the framework developed by \cite{IshiiWKAM}, \cite{BIM} and related works. The convexity
mechanism in this paper applies in settings where comparison is
available under structural assumptions different from (H1)--(H3), provided
the hypotheses used in the Neumann-to-state-constraint lemma are satisfied.

Our analysis later also involves the so-called state constraint problem associated to \eqref{general eq1}. Below we quickly review the definition of viscosity solutions to the state constraint problem. 

\begin{defi}
    A function $u\in C(\Oba\times (0, \infty))$ is called a constrained viscosity solution if both
\begin{equation}\label{constraint-sub}
u_t+ H(x, t, u, \nabla u)\leq 0 \quad \text{in $\Omega\times (0, \infty)$}
\end{equation}
and 
\begin{equation}\label{constraint-super}
u_t+ H(x, t, u, \nabla u)\geq 0 \quad \text{in $\Oba\times (0, \infty)$}
\end{equation}
hold in the viscosity sense. More precisely, the interior subsolution property \eqref{constraint-sub} is defined as in a) of Definition~\ref{def:neumann}(1), and $u$ is said to satisfy \eqref{constraint-super} in the viscosity sense (or called a constrained viscosity supersolution) if  \eqref{vis-super1} holds whenever there exist $(x_0, t_0)\in \Oba\times (0, \infty)$ and $\varphi\in C^1(\Oba\times (0, \infty))$ such that $u-\varphi$ attains a local minimum in $\Oba\times (0, \infty)$ at $(x_0, t_0)$. 
\end{defi}

The state-constraint supersolution condition
\eqref{constraint-super} can also be equivalently expressed in terms of
relative subjets. Namely, $u$ is a constrained viscosity supersolution
if and only if
\[
a+H(x,t,u(x,t),p)\geq 0
\]
for every $(x,t)\in\overline\Omega\times(0,\infty)$ and every
$(a,p)\in
\mathcal P_{\overline\Omega}^{1,-}u(x,t)$. Moreover, by continuity of $H$, the same inequality holds for every $(a,p)\in
\overline{\mathcal P}_{\overline\Omega}^{1,-}u(x,t)$. Thus, $u$ is a constrained viscosity supersolution
if and only if 
\begin{equation}\label{sc-equiv}
a+H(x,t,u(x,t),p)\geq0
\quad
\text{for all }
(a,p)\in
\overline{\mathcal P}_{\overline\Omega}^{1,-}u(x,t).
\end{equation}

\section{Neumann boundary conditions and state constraints}\label{sec:neumann-state}

Our convexity preserving property is based on Lemma~\ref{lem:constraint}, which reveals that for a convex domain the Neumann boundary conditions actually implies state constraint condition if (H5) holds.
We prove Lemma \ref{lem:constraint} in this section. Let $d: \Oba\to \R$ be a $C^2$  function such that 
\begin{equation}\label{d-def}
d(x)= -\dist(x, \R^n\setminus \Omega), \quad \text{for all $x\in \N\cap \Oba$}, 
\end{equation}
where $\N\subset \R^n$ is a neighborhood of $\partial \Omega$. 
It is clear that $\nabla d(x)=\nu(x)$ for $x\in \partial \Omega$.

\begin{lem}\label{lem:constraint}
    Let $\Omega\subset \R^n$ be a bounded convex domain of $C^2$ class with outward unit normal $\nu$. Assume that $H\in C(\Oba\times [0, \infty)\times \R\times \R^n)$ satisfies  (H5). Let $u\in LSC(\Oba\times (0, \infty))$ be a viscosity supersolution of \eqref{general eq1}\eqref{general eq2}. Then it satisfies \eqref{sc-form} in the viscosity sense. 
\end{lem}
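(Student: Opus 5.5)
The plan is to verify \eqref{sc-form} only at boundary points, since at interior points the Neumann supersolution condition \eqref{vis-super1} is already the required inequality. Fix $(x_0,t_0)\in\partial\Omega\times(0,\infty)$ and $\varphi\in C^1(\Oba\times(0,\infty))$ such that $u-\varphi$ attains a strict global minimum at $(x_0,t_0)$; we must show $\varphi_t+H(x_0,t_0,u(x_0,t_0),\nabla\varphi)\geq 0$ at $(x_0,t_0)$. If $\la\nabla\varphi(x_0,t_0),\nu(x_0)\ra< g(x_0)$, then the boundary alternative \eqref{intro-vis-boundary-super} forces \eqref{vis-super1} directly, and we are done. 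The whole difficulty is the case $\la\nabla\varphi(x_0,t_0),\nu(x_0)\ra\geq g(x_0)$, where the Neumann branch may be the one that holds.

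In that case I would perturb the test function so that the Neumann branch is excluded. Using the function $d$ from \eqref{d-def}, set $\psi_\beta(x,t)=\varphi(x,t)-\beta d(x)$ with $\beta>0$ large. Since $d\le 0$ on $\Oba$ with $d(x_0)=0$, the difference $u-\psi_\beta=u-\varphi+\beta d$ is no longer minimized at $(x_0,t_0)$ in general. So I instead push in the opposite direction and set $\psi_\beta=\varphi+\beta d$. Then $u-\psi_\beta=(u-\varphi)-\beta d\geq u-\varphi$, with equality on $\partial\Omega$. Hence $(x_0,t_0)$ is still a strict global minimum of $u-\psi_\beta$, because $u-\psi_\beta\ge u-\varphi> (u-\varphi)(x_0,t_0)=(u-\psi_\beta)(x_0,t_0)$ away from the point. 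Its gradient at $x_0$ is $\nabla\varphi+\beta\nu(x_0)$, whose normal component is $\la\nabla\varphi,\nu\ra+\beta$. This is larger than $g(x_0)$, which does not help exclude the Neumann branch. I therefore reverse the sign choice: I want a gradient with normal component strictly less than $g(x_0)$, which means subtracting a large multiple of $\nu$, but the touching from below must survive.

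The key observation is that a subtraction is admissible once the direction of monotonicity is used correctly. Write $p=\nabla\varphi(x_0,t_0)$ and $\alpha_0=\la p,\nu\ra-g(x_0)\geq0$, and let $q=p-(\alpha_0+\delta)\nu(x_0)$, so that $\la q,\nu\ra=g(x_0)-\delta<g$. Because $(x_0,t_0)$ minimizes $u-\varphi$ over $\Oba$ and $\Omega$ lies on the side $\la y-x_0,\nu\ra\le 0$, the pair $(\varphi_t,p-\alpha\nu)$ lies in the relative subjet $\mathcal P^{1,-}_{\Oba}u(x_0,t_0)$ for every $\alpha\ge0$. This is the point where convexity of $\Omega$, or more generally one-sidedness, enters; in the $C^2$ case one uses $-\alpha d$ together with $d\le0$, since $u-(\varphi-\alpha d)=u-\varphi+\alpha d$ and $d(y)\approx\la y-x_0,\nu\ra\le0$. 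To make this rigorous I would use the test function $\varphi-(\alpha_0+\delta)d-|x-x_0|^4-|t-t_0|^2$ and, if needed, work with the closed subjet in the semijet formulation recalled in Section~\ref{sec:prelim}. This is the delicate step: the minimum of $u-\varphi+\alpha d$ may move off $(x_0,t_0)$, so I would pass through the semijet characterization rather than touching directly.

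With the subjet element $(\varphi_t,q)$ in hand, whose normal component is below $g$, the Neumann alternative fails, and the supersolution property on subjets gives $\varphi_t+H(x_0,t_0,u,q)\ge0$. Now $\la q,\nu\ra=g-\delta$, and for $\delta\to0$ we have $q\to q_0=p-\alpha_0\nu$ with $\la q_0,\nu\ra=g$. Continuity of $H$ then gives $\varphi_t+H(x_0,t_0,u,q_0)\ge0$. Finally, (H5) applied to $q_0$, which satisfies $\la q_0,\nu\ra\ge g$, with $\alpha=\alpha_0\ge0$ yields $H(x_0,t_0,u,p)=H(\cdot,q_0+\alpha_0\nu)\ge H(\cdot,q_0)$, which proves the claim. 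The expected main obstacle is justifying that $(\varphi_t,p-\alpha\nu)$ is a genuine element of the relative subjet at a boundary point, and that the viscosity supersolution inequality may be applied to it. I expect this to follow from the one-sided expansion $d(y)\le 0$ and the semijet equivalence stated earlier.
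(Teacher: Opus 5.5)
\textbf{There is a genuine gap.} Your reduction to the case $\la\nabla\varphi(x_0,t_0),\nu(x_0)\ra\ge g(x_0)$ is fine, and so is your closing use of (H5). The step in between fails, though, and it cannot be repaired as stated.

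The assertion that $(\varphi_t,p-\alpha\nu(x_0))\in\mathcal P^{1,-}_{\Oba}u(x_0,t_0)$ for $\alpha\ge0$ has the sign reversed. For $y\in\Oba$, convexity gives $\la y-x_0,\nu(x_0)\ra\le0$, hence $\la p-\alpha\nu(x_0),y-x_0\ra\ge\la p,y-x_0\ra$. So the affine function with slope $p-\alpha\nu$ lies \emph{above} the one with slope $p$ on $\Oba$, and need not stay below $u$. Your own identity $u-(\varphi-\alpha d)=u-\varphi+\alpha d\le u-\varphi$ says the same thing: the minimum is destroyed, not preserved. One-sidedness gives the opposite stability, $(\varphi_t,p+\alpha\nu(x_0))\in\mathcal P^{1,-}_{\Oba}u(x_0,t_0)$, as your computation with $\varphi+\beta d$ showed. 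That only increases the normal component, so it can never exclude the Neumann branch.

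The claim concerns $u$ alone, so here is a counterexample. Take $u\equiv0$, $\varphi\equiv0$, $g=0$. Then $\alpha_0=0$ and $q=-\delta\nu(x_0)$. Along $y=x_0-h\nu(x_0)$, $h>0$, the subjet inequality would read $0\ge\delta h+o(h)$, which is false. In fact the closed relative subjet of $u$ at $(x_0,t_0)$ consists only of the pairs $(0,c\nu(x_0))$ with $c\ge0$. Passing to closed semijets cannot help, since they carry exactly the information of $C^1$ touching. The auxiliary test function $\varphi-(\alpha_0+\delta)d-|x-x_0|^4-|t-t_0|^2$ does not touch $u$ from below at $(x_0,t_0)$ in general; it fails already in this example.

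The missing idea is that, when $\la\nabla\varphi,\nu\ra\ge g$ at $(x_0,t_0)$, the inequality must be imported from \emph{nearby} points where the Neumann branch is excluded. It cannot be produced at $(x_0,t_0)$ by deforming the test function. The paper does this by doubling variables: it minimizes $u(x,t)-\varphi(y,t)+\frac{1}{\vep}|x-y|^2-G_\vep(x)+G_\vep(y)$, where $\la\nabla G_\vep,\nu\ra=g-\vep$ on $\partial\Omega$ and $|\nabla^2G_\vep|\le C\vep^{-1/2}$.
\begin{itemize}
\item At a minimizer, $u$ is touched at $(x_\vep,t_\vep)$ by a function with gradient $\frac{2}{\vep}(y_\vep-x_\vep)+\nabla G_\vep(x_\vep)$. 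If $x_\vep\in\partial\Omega$, its normal component is $<g$, because convexity gives $\la y_\vep-x_\vep,\nu(x_\vep)\ra\le0$. So the interior inequality holds there.
\item On the $y$-side, the maximum of $\varphi(\cdot,t_\vep)-\eta$ at $y_\vep$ gives $\nabla\varphi(y_\vep,t_\vep)=\nabla\eta(y_\vep)+c_\vep\nu(y_\vep)$ with $c_\vep\ge0$. By convexity again, $\la\nabla\eta(y_\vep),\nu(y_\vep)\ra\ge g(y_\vep)-\vep$.
\item Since $|x_\vep-y_\vep|=o(\sqrt{\vep})$, both gradients converge to a common $q$. This yields $\varphi_t+H(x_0,t_0,u(x_0,t_0),q)\ge0$, with $\nabla\varphi(x_0,t_0)=q+c\nu(x_0)$, $c\ge0$, and $\la q,\nu(x_0)\ra\ge g(x_0)$.
\end{itemize}
Your final (H5) step then applies verbatim with this $q$ in place of your $q_0$. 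The difference is that $q$ comes out of the limiting procedure at nearby points rather than being prescribed as $p-\alpha_0\nu$ at $(x_0,t_0)$.
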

\begin{proof}
    Suppose that there exist $\varphi\in C^1(\Oba\times (0, \infty))$ and $(\hat{x}, \hat{t})\in \Oba\times (0, \infty)$ such that $u-\varphi$ attains a strict minimum over $\Oba\times (0, \infty)$ at $(\hat{x}, \hat{t})\in \partial \Omega\times (0, \infty)$. 
    
    We adopt the doubling variable technique, considering for each $\vep>0$ small
    \[
    \Phi_\vep(x, y, t)=u(x, t)-\varphi(y, t)+\frac{1}{\vep}|x-y|^2-G_\vep(x)+G_\vep(y)
    \]
    for $x, y\in \Oba$ and $t>0$, where $G_\vep\in C^2(\Oba)$ is taken such that
    \begin{enumerate}
        \item[(a)] $\sup_{\Oba}|\nabla G_\vep|\leq C$ and $\sup_{\Oba}|\nabla^2 G_\vep|\leq C\vep^{-1/2}$ for some $C>0$ independent of $\vep$;
        \item[(b)] $G_\vep\to 0$ uniformly in $\Oba$ as $\vep\to 0$;  
        \item[(c)] For any compact subset $K\subset \Omega$, $|\nabla G_\vep|\to 0$ uniformly in $K$ as $\vep\to 0$.
        \item[(d)] $\la \nabla G_\vep, \nu\ra<g$ on $\partial \Omega$ and $\nabla G_\vep\to g \nu$ uniformly on $\partial \Omega$ as $\vep\to 0$;      
    \end{enumerate}
    
We now construct such a family.  Fix a $C^2$ extension
$\widetilde g$ of $g$ to a tubular neighborhood of $\partial\Omega$.
Such an extension exists by the standard extension theorem for functions on a
compact $C^2$ hypersurface.  Choose $\chi\in C^\infty([0,\infty))$ such that
\[
0\leq\chi\leq1,\qquad
\chi(s)=1\ \text{for }0\leq s\leq1,\qquad
\chi(s)=0\ \text{for }s\geq2.
\]
For $\varepsilon>0$ sufficiently small, define
\begin{equation}\label{G-eps-def}
G_\varepsilon(x)
=
(\widetilde g(x)-\varepsilon)\,
d(x)\,
\chi\!\left(-\frac{d(x)}{\sqrt{\varepsilon}}\right)
\end{equation}
in the tubular neighborhood and set $G_\varepsilon=0$ where
$-d(x)\geq2\sqrt{\varepsilon}$.  This gives a $C^2$ function on
$\overline\Omega$.

At a boundary point $x\in\partial\Omega$, we have $d(x)=0$,
$\nabla d(x)=\nu(x)$ and $\chi(0)=1$, and hence
\begin{equation}\label{G-boundary-gradient}
\nabla G_\varepsilon(x)=(g(x)-\varepsilon)\nu(x).
\end{equation}
Thus
\[
\langle\nabla G_\varepsilon(x),\nu(x)\rangle
=g(x)-\varepsilon<g(x),
\]
and $\nabla G_\varepsilon\to g\nu$ uniformly on $\partial\Omega$.

Since $G_\varepsilon$ is supported in a boundary layer of width
$O(\sqrt{\varepsilon})$, direct differentiation of
\eqref{G-eps-def} gives
\[
\|G_\varepsilon\|_{L^\infty(\Omega)}\leq C\sqrt{\varepsilon},
\qquad
\|\nabla G_\varepsilon\|_{L^\infty(\Omega)}\leq C,
\qquad
\|\nabla^2G_\varepsilon\|_{L^\infty(\Omega)}
\leq C\varepsilon^{-1/2}.
\]
Moreover, for every compact $K\subset\Omega$, one has
$G_\varepsilon=0$ on $K$ for all sufficiently small $\varepsilon$.
Hence properties (a)--(d) above are satisfied.

We next take $J$ to be a bounded open interval in $(0, \infty)$ such that $\hat{t}\in J$. Let $(x_\vep, y_\vep, t_\vep)$ be a minimizer of $\Phi_\vep$ over $\Oba^2\times \ol{J}$. It is then easily seen that 
\[
\Phi_\vep(x_\vep, y_\vep, t_\vep)\leq \Phi_\vep(\hat{x}, \hat{x}, \hat{t})=u(\hat{x}, \hat{t})-\varphi(\hat{x}, \hat{t}), 
\]
which implies 
\begin{equation}\label{constraint eq1}
\begin{aligned}
\frac{1}{\vep}|x_\vep-y_\vep|^2\leq u(\hat{x}, \hat{t}) -\varphi(\hat{x}, \hat{t})-u(x_\vep, t_\vep)+\varphi(y_\vep, t_\vep)+G_\vep(x_\vep)-G_\vep(y_\vep).    
\end{aligned}
\end{equation}
By compactness of $\overline\Omega$ and the boundedness of the terms on the
right-hand side of \eqref{constraint eq1}, there is $M>0$ such that
\[
|x_\varepsilon-y_\varepsilon|\leq M\sqrt{\varepsilon}.
\]
After passing to a subsequence,
\[
x_\varepsilon\to x_0,\qquad
y_\varepsilon\to x_0,\qquad
t_\varepsilon\to t_0
\]
for some $(x_0,t_0)\in\overline\Omega\times\overline J$.
Using the lower semicontinuity of $u$, property (b) of $G_\varepsilon$, and
\eqref{constraint eq1}, we get
\[
0\leq
u(\hat x,\hat t)-\varphi(\hat x,\hat t)
-u(x_0,t_0)+\varphi(x_0,t_0).
\]
Since $u-\varphi$ has a strict minimum at $(\hat x,\hat t)$, this implies $(x_0,t_0)=(\hat x,\hat t)$. Returning once more to \eqref{constraint eq1}, lower semicontinuity yields $u(x_\varepsilon,t_\varepsilon)\to u(\hat x,\hat t)$, and therefore
\begin{equation}\label{constraint eq3}
\frac{|x_\varepsilon-y_\varepsilon|^2}{\varepsilon}\to0.
\end{equation}
In particular, we have $|x_\varepsilon-y_\varepsilon|=o(\sqrt{\varepsilon})$.

In view of the minimum of $\Phi_\vep$ at $(x_\vep, y_\vep, t_\vep)\in \Oba^2\times (0, \infty)$, we deduce the following two conditions:
\begin{enumerate}
    \item[i)] $(x, t)\mapsto u(x, t)-\psi(x, t)$ attains a local minimum at $(x_\vep, t_\vep)$ over $\Oba\times (0, \infty)$, where
    \[
    \psi(x, t)=\varphi(y_\vep, t)-\frac{1}{\vep}|x-y_\vep|^2+G_\vep(x)-G_\vep(y_\vep).
    \]
    \item[ii)] $y\mapsto \varphi(y, t_\vep)-\eta(y)$ attains a local maximum at $y_\vep\in \Oba$, where 
    \[
    \eta(y)=\frac{1}{\vep}|x_\vep-y|^2-G_\vep(x_\vep)+G_\vep(y). 
    \]
\end{enumerate}

By direct computations, we get
\begin{equation}\label{ishii-t}
    \partial_t \psi(x_\vep, t_\vep)= \partial_t \varphi(y_\vep, t_\vep),
\end{equation}
\begin{equation}\label{ishii-x}
    \nabla\psi(x_\vep, t_\vep)= \frac{2}{\vep}(y_\vep-x_\vep)+\nabla G_\vep(x_\vep), \quad \nabla \eta(y_\vep)=\frac{2}{\vep}(y_\vep-x_\vep)+\nabla G_\vep(y_\vep).
\end{equation}

If $x_\vep\in \Omega$, then by applying the definition of viscosity supersolutions of \eqref{general eq1} to $u$, we obtain 
\begin{equation}\label{constraint eq2}
\partial_t \psi(x_\vep, t_\vep)+ H(x_\vep, t_\vep, u(x_\vep, t_\vep), \nabla \psi(x_\vep, t_\vep))\geq 0. 
\end{equation}
In the case of $x_\vep\in \partial \Omega$, since $\la \nabla G_\vep, \nu\ra<g$ on $\partial \Omega$ as listed in property (d) of $G_\vep$, by \eqref{ishii-x} and the convexity of $\Omega$ we have 
\[
\la \nabla \psi(x_\vep, t_\vep), \nu(x_\vep)\ra= \frac{2}{\vep}\la y_\vep-x_\vep, \nu(x_\vep)\ra +\la \nabla G_\vep(x_\vep), \nu(x_\vep)\ra<g(x_\vep). 
\]
We thus deduce \eqref{constraint eq2} again by using the definition of viscosity supersolutions at boundary points. 

Since the state-constraint supersolution inequality is already part of the
usual supersolution condition at interior points, it remains only to treat
$\hat x\in\partial\Omega$.  We distinguish two cases according to the
position of $y_\varepsilon$.

Our goal is to prove
\begin{equation}\label{constraint eq4}
\partial_t\varphi(\hat x,\hat t)
+H(\hat x,\hat t,u(\hat x,\hat t),\nabla\varphi(\hat x,\hat t))
\geq0.
\end{equation}
If $y_\vep\in \Omega$ for all $\vep>0$ small, then, by \eqref{ishii-x} we have 
\[
\nabla \varphi(y_\vep, t_\vep)-\nabla \psi(x_\vep, t_\vep)=\nabla G_\vep(y_\vep)- \nabla G_\vep(x_\vep).
\]
Since $\sup_{\Oba}|\nabla^2 G_\vep|\leq C\vep^{-1/2}$ as stated in property (a) of $G_\vep$ and $|x_\vep-y_\vep|=o(\vep^{1/2})$, we are led to 
\[
|\nabla G_\vep(x_\vep)- \nabla G_\vep(y_\vep)|\to 0\quad \text{as $\vep\to 0$,}
\]
Adopting \eqref{ishii-t} and \eqref{constraint eq2}, and the continuity of $H$, we are thus led to 
  \eqref{constraint eq4} again by  sending $\vep\to 0$  . 

It remains to consider a subsequence for which
$y_\varepsilon\in\partial\Omega$.  Since
$y\mapsto\varphi(y,t_\varepsilon)-\eta(y)$ has a local maximum over
$\overline\Omega$ at $y_\varepsilon$, the tangential derivatives agree and
the one-sided normal derivative gives
\begin{equation}\label{normal-contact}
\nabla\varphi(y_\varepsilon,t_\varepsilon)
=
\nabla\eta(y_\varepsilon)+c_\varepsilon\nu(y_\varepsilon),
\quad c_\varepsilon\geq0.
\end{equation}
The convexity of $\Omega$ yields $\langle y_\varepsilon-x_\varepsilon,\nu(y_\varepsilon)\rangle\geq0$. Using \eqref{ishii-x} and \eqref{G-boundary-gradient}, we obtain
\[
\langle\nabla\eta(y_\varepsilon),\nu(y_\varepsilon)\rangle
=
\frac{2}{\varepsilon}
\langle y_\varepsilon-x_\varepsilon,\nu(y_\varepsilon)\rangle
+g(y_\varepsilon)-\varepsilon
\geq g(y_\varepsilon)-\varepsilon.
\]
It follows that
\begin{align}
    0\leq c_\varepsilon \leq ||\nabla \varphi||_{L^\infty(\overline{\Omega}\times J)}+||g||_{L^\infty(\partial\Omega)}+\varepsilon. 
\end{align}
Thus $c_\varepsilon, \nabla\psi(x_\varepsilon,t_\varepsilon),\nabla\eta(y_\varepsilon)$ are bounded. After extracting a further subsequence, we have $c_\varepsilon\rightarrow c\geq 0$ and $\nabla\psi(x_\varepsilon,t_\varepsilon),\nabla\eta(y_\varepsilon)\rightarrow q$, where $q\in \R^n$ satisfies 
\begin{align*}
    \nabla\varphi (\hat{x}, \hat{t})=q+c\nu (\hat{x}),\quad \langle q,\nu(\hat{x})\rangle\geq g(\hat{x}).
\end{align*}
Passing directly to the limit in \eqref{constraint eq2} gives
\begin{equation*}
\partial_t\varphi(\hat x,\hat t)
+H(\hat x,\hat t,u(\hat x,\hat t),q)
\geq0.
\end{equation*}
Finally, (H5) implies
\begin{align*}
    H(\hat x,\hat t,u(\hat x,\hat t),\nabla \varphi(\hat{x},\hat{t}))\geq H(\hat x,\hat t,u(\hat x,\hat t),q), 
\end{align*}
which yields \eqref{constraint eq4}. 
\end{proof}

When $g=0$, we can simply take $G_\vep$ to be
\begin{equation}\label{G-fun}
G_\vep(x)=-\vep d(x), \quad x\in \Oba. 
\end{equation}
One can easily verify that such a choice fulfills the conditions (a)--(d) imposed on $G_\vep$. 

\begin{rmk}
    The same proof as above (with $G_\vep$ given by \eqref{G-fun}) extends to the case of the fully nonlinear boundary condition \eqref{gen bdry} provided that $B\in C(\partial \Omega\times (0, \infty)\times \R\times\R^n)$ satisfies \eqref{bdry-mono} and 
    \begin{equation}
    B(x, t, r, p)< 0 \quad \text{whenever $\la p, \nu(x)\ra< 0$ holds.}
    \end{equation}
   This generalization allows for boundary conditions beyond \eqref{general eq2}, including the oblique boundary condition 
    \[
    \la \nabla u, \nu \ra-\beta|\nabla u|=0 \quad \text{on $\partial \Omega\times (0, \infty)$}
    \]
    with $\beta\in [0, 1)$.
\end{rmk}

\section{Convexity preserving property}\label{sec:convexity}

We now use Lemma~\ref{lem:constraint} to study preservation of generalized convexity for viscosity solutions to \eqref{general eq1}--\eqref{initial}. 

\subsection{ General Neumann boundary condition}
By Lemma~\ref{lem:constraint}, the Neumann problem can be treated as a
state-constraint problem at the supersolution level.  We may therefore adapt
the convex-envelope argument of \cite[Theorem~1]{ALL}.  The original result
there is stationary; the proposition below is the time-dependent first order
version needed here and also keeps track of a time-dependent convexity
parameter.

\begin{prop}[Supersolution property of generalized convex envelope]\label{prop:envelope}
Let $\Omega\subset\R^n$ be a bounded convex $C^2$ domain. Assume that the Hamiltonian 
is independent of $u$ and that
$H\in C(\Oba\times(0,\infty)\times\R^n)$ satisfies (H4$'$) and (H5).
Let $\mu\in C^1([0,T))$ satisfy \eqref{g-fun}. Let
$u\in LSC(\Oba\times(0,\infty))$ be a viscosity supersolution of
\eqref{constraint-super}. Fix
$\lambda=(\lambda_1,\ldots,\lambda_{n+1})$ satisfying $\lambda_i>0$ and
$\sum_{i=1}^{n+1}\lambda_i=1$. Define
\begin{equation}\label{envelope}
u_\ast(x,t)=\inf\left\{
\sum_{i=1}^{n+1}\lambda_i u(y_i,t)-\frac{\mu(t)}2Q(y_1,\ldots,y_{n+1}):
y_i\in\Oba,\ \sum_{i=1}^{n+1}\lambda_i y_i=x\right\},
\end{equation}
where
\[
Q(y_1,\ldots,y_{n+1})=
\sum_{i=1}^{n+1}\lambda_i|y_i|^2-
\left|\sum_{i=1}^{n+1}\lambda_i y_i\right|^2.
\]
Then $u_\ast$ satisfies \eqref{constraint-super} in the viscosity sense.
\end{prop}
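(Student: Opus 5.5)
The plan is to adapt the argument of \cite[Theorem~1]{ALL} to the present time-dependent setting; the proof is carried out at the level of subjets. Fix $(\hat x,\hat t)\in\Oba\times(0,T)$ and $(a,p)\in\mathcal P^{1,-}_{\Oba}u_\ast(\hat x,\hat t)$; we must show $a+H(\hat x,\hat t,p)\geq0$. First, since $u$ is lower semicontinuous and $\Oba$ is compact, the infimum in \eqref{envelope} is attained. So there exist $\hat y_1,\dots,\hat y_{n+1}\in\Oba$ with $\sum\lambda_i\hat y_i=\hat x$ and
\[
u_\ast(\hat x,\hat t)=\sum_i\lambda_iu(\hat y_i,\hat t)-\frac{\mu(\hat t)}2Q(\hat y_1,\dots,\hat y_{n+1}).
\]
The key structural remark is that the constraint set $\{(y_i):\sum\lambda_iy_i=x\}$ can be perturbed in a way that stays inside $\Oba^{n+1}$. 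Only constraint-preserving perturbations of the whole tuple are admissible, and this is exactly why we need the \emph{state-constraint} supersolution property \eqref{constraint-super} at boundary points rather than a Neumann alternative.

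Next, I would transfer the test function from $u_\ast$ to each $u(\cdot,\cdot)$ near $(\hat y_i,\hat t)$. Let $\phi\in C^1$ touch $u_\ast$ from below at $(\hat x,\hat t)$, strictly and globally. Then the function
\[
\Psi(y_1,\dots,y_{n+1},t)=\sum_i\lambda_iu(y_i,t)-\frac{\mu(t)}2Q(y)-\phi\Big(\sum_i\lambda_iy_i,t\Big)
\]
attains its minimum (value $0$) at $(\hat y,\hat t)$ over $\Oba^{n+1}\times(0,T)$. To decouple the variables and the common time, I would double variables. Introduce separate times $t_i$ and points $y_i$, and minimize
\[
\sum_i\lambda_i u(y_i,t_i)-\frac{\mu(s)}2Q(y)-\phi\Big(\sum\lambda_iy_i,s\Big)+\frac1{\vep}\sum_i|t_i-s|^2.
\]
Standard arguments give minimizers converging to $(\hat y,\hat t,\dots)$ with $u(y_i^\vep,t_i^\vep)\to u(\hat y_i,\hat t)$. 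For each fixed $i$, freezing the other variables shows that $u(\cdot,\cdot)$ minus a $C^1$ function has a minimum at $(y_i^\vep,t_i^\vep)$ over $\Oba\times(0,\infty)$. Its spatial gradient is
\[
p^\vep+\mu(s^\vep)(y_i^\vep-x^\vep),
\]
where $p^\vep=\nabla\phi(x^\vep,s^\vep)$ and $x^\vep=\sum\lambda_jy_j^\vep$; here I use $\nabla_{y_i}Q=2\lambda_i(y_i-x)$. The time derivative is $a_i^\vep=\frac2{\lambda_i\vep}(s^\vep-t_i^\vep)$, and optimality in $s$ gives
\[
\sum_i\lambda_ia_i^\vep=\phi_t(x^\vep,s^\vep)+\frac{\mu'(s^\vep)}2Q(y^\vep).
\]
Since $y_i^\vep$ may lie on $\partial\Omega$, it is precisely \eqref{constraint-super} (the hypothesis, obtained in practice from Lemma~\ref{lem:constraint} via (H5)) that yields $a_i^\vep+H(y_i^\vep,t_i^\vep,p^\vep+\mu(s^\vep)(y_i^\vep-x^\vep))\geq0$ with no boundary alternative.

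Finally, multiply the $i$-th inequality by $\lambda_i$ and sum. Apply (H4$'$) with $s=\mu(s^\vep)$ and base point $x^\vep$; before doing so I would replace $t_i^\vep$ by $s^\vep$ using continuity of $H$ on bounded sets, since all gradients are bounded. This gives
\[
0\leq\phi_t(x^\vep,s^\vep)+\frac{\mu'(s^\vep)}2Q(y^\vep)+H(x^\vep,s^\vep,p^\vep)+m(\mu(s^\vep))Q(y^\vep)+o(1).
\]
Because $Q\geq0$, \eqref{g-fun} lets us drop the $Q$ terms. Letting $\vep\to0$ then gives $\phi_t+H(\hat x,\hat t,\nabla\phi)\geq0$. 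I expect the main obstacle to be the bookkeeping in the doubling step: the time derivatives $a_i^\vep$ are individually unbounded, so the error from replacing $t_i^\vep$ by $s^\vep$ in $H$ must be controlled. This is where I would invoke (H3)/continuity of $H$ together with $|t_i^\vep-s^\vep|\to0$. A secondary point is ensuring that the time variable stays away from $0$ and $T$, which is handled by localizing $t$ to a compact interval $\ol J\subset(0,T)$ as in the proof of Lemma~\ref{lem:constraint}.
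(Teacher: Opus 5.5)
Your proof is correct and is essentially the paper's argument. You attain the infimum in \eqref{envelope}, transfer the test function to each $u$ at the optimal tuple with spatial slope $\nabla\phi+\mu(y_i-x)$, invoke the state-constraint inequality (with no boundary alternative) at every point, sum with weights $\lambda_i$, and close with (H4$'$) and \eqref{g-fun}. The only difference is that you do the time decoupling by hand, doubling the time variables and summing before letting $\vep\to0$ (which spares you any need for limits of the individual $a_i^\vep$), where the paper quotes the first-order part of the parabolic Crandall--Ishii lemma and works with closed subjets. Your minimizers converge only to some optimal tuple, not necessarily $\hat y$, but this is harmless since the argument uses only $x^\vep\to\hat x$ and $s^\vep\to\hat t$.
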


\begin{proof}
Since $\overline\Omega$ is compact, the infimum in \eqref{envelope} is attained.
A standard compactness argument also shows that
$u_\ast\in LSC(\overline\Omega\times(0,T))$.

Let $\varphi\in C^1(\overline\Omega\times(0,T))$ and suppose that
$u_\ast-\varphi$ has a strict local minimum at $(\hat x,\hat t)$.
Choose $\hat y_i\in\overline\Omega$ such that
\begin{equation}\label{convex-proof eq1}
 \sum_{i=1}^{n+1}\lambda_i\hat y_i=\hat x,\qquad
 u_\ast(\hat x,\hat t)
 =\sum_{i=1}^{n+1}\lambda_i u(\hat y_i,\hat t)
 -\frac{\mu(\hat t)}2 Q(\hat y_1,\ldots,\hat y_{n+1}).
\end{equation}
Then
\[
 \Phi(y_1,\ldots,y_{n+1},t)
 :=\sum_{i=1}^{n+1}\lambda_i u(y_i,t)
 -\varphi\!\left(\sum_{i=1}^{n+1}\lambda_i y_i,t\right)
 -\frac{\mu(t)}2Q(y_1,\ldots,y_{n+1})
\]
has a local minimum at $(\hat y_1,\ldots,\hat y_{n+1},\hat t)$.

Applying the parabolic Crandall-Ishii lemma (the first order part of \cite[Theorem~8.3]{CIL}) to the functions $\lambda_i u$, we
obtain numbers $b_i$ such that
\[
 \sum_{i=1}^{n+1}b_i
 =\varphi_t(\hat x,\hat t)+\frac{\mu'(\hat t)}2Q(\hat y)
\]
and
\[
 \left(
 \frac{b_i}{\lambda_i},
 \nabla\varphi(\hat x,\hat t)+\mu(\hat t)(\hat y_i-\hat x)
 \right)
 \in\overline{\mathcal P}^{\,1,-}_{\overline\Omega}
 u(\hat y_i,\hat t).
\]
Here we used
\[
 \nabla_{y_i}Q(\hat y)=2\lambda_i(\hat y_i-\hat x).
\]
Since $u$ is a state-constraint supersolution by Lemma~\ref{lem:constraint}, the equivalent characterization \eqref{sc-equiv} with relative subjets yields
for every $i=1,\ldots,n+1$,
\[
 \frac{b_i}{\lambda_i}
 +H\!\left(\hat y_i,\hat t,
 \nabla\varphi(\hat x,\hat t)+\mu(\hat t)(\hat y_i-\hat x)\right)
 \ge0.
\]
Multiplying by $\lambda_i$ and summing yields
\[
0\le
 \varphi_t(\hat x,\hat t)
 +\frac{\mu'(\hat t)}2Q(\hat y)
+\sum_{i=1}^{n+1}\lambda_i
 H\left(\hat y_i,\hat t,
 \nabla\varphi(\hat x,\hat t)+\mu(\hat t)(\hat y_i-\hat x)\right).
\]
Applying (H4$'$) with $s=\mu(\hat t)$, we obtain
\[
0\le
 \varphi_t(\hat x,\hat t)
 +H(\hat x,\hat t,\nabla\varphi(\hat x,\hat t))
 +\left(\frac{\mu'(\hat t)}2+m(\mu(\hat t))\right)Q(\hat y).
\]
Since $Q\ge0$ and \eqref{g-fun} holds, it follows that
\[
 \varphi_t(\hat x,\hat t)
 +H(\hat x,\hat t,\nabla\varphi(\hat x,\hat t))
 \ge0.
\]
Thus $u_\ast$ is a viscosity supersolution of \eqref{constraint-super}.
\end{proof}

Let us now prove Theorem~\ref{thm:convex}. 
\begin{proof}[Proof of Theorem~\ref{thm:convex}] We extend the definition of $u_\ast(x, t)$ in \eqref{envelope} to all $x\in \Oba$ and all $t\geq 0$, rather than just $t>0$. By adopting the same argument in Step 1 of the proof of Proposition~\ref{prop:envelope} (with $(x^0, t^0)$ replaced by $(x^0, 0)$), we can show that $u_\ast\in LSC(\Oba\times [0, T))$. Moreover, since $u_0$ is assumed to be $\mu(0)$-convex in $\Oba$,  by definition we have $u_\ast(\cdot, 0)=u_0$ in $\Oba$. 

We have also shown in Lemma~\ref{lem:constraint} that $u$ satisfies \eqref{constraint-super} in the viscosity sense. By Proposition~\ref{prop:envelope}, we see that $u_\ast$ also solves \eqref{constraint-super} in the viscosity sense, which immediately implies $u_\ast$ is a viscosity supersolution of \eqref{general eq1}\eqref{general eq2}. Hence, by the comparison principle, we obtain $u_\ast\geq u$ in $\Oba\times [0, T)$. Clearly, $u_\ast\leq u$ in $\Oba\times [0, T)$. We therefore deduce that $u_\ast=u$ in $\Oba\times [0, T)$. Since $\lambda=(\lambda_1, \ldots, \lambda_{n+1})\in \R^{n+1}$ is arbitrary with positive components $\lambda_i$ satisfying $\sum_{i=1}^{n+1} \lambda_i=1$, this completes the proof of $\mu(t)$-convexity of $u(\cdot, t)$ in $\Oba$ for all $t>0$. 
\end{proof}

\begin{example}
    Let $a>0$. We focus on \eqref{general eq1}--\eqref{initial} in the case of $H(p)=(a^2+|p|^2)^{\frac{1}{2}}$ for $p\in \R^n$. In this special case, we have
    \begin{equation}
        \nabla H(p)=(a^2+|p|^2)^{-\frac{1}{2}}p,\quad \nabla^2 H(p)=-(a^2+|p|^2)^{-\frac{3}{2}}p\otimes p+(a^2+|p|^2)^{-\frac{1}{2}}I.  
    \end{equation}
In particular, 
\[
\nabla^2H(p)\leq (a^2+|p|^2)^{-\frac{1}{2}}I
\]
for all $p\in \R^n$, which verifies (H4') with $m(s)=s^2/(2a)$ for $s\in\R$. This enables us to find $\mu$ in \eqref{g-fun} by solving  
\begin{equation}\label{ex4-ode1}
    a \mu'+\mu^2=0\quad \text{in $(0, T)$}
\end{equation} 
with a given initial value $\mu(0)\in \R$. It is clear that $\mu\equiv 0$ is a solution in $(0, \infty)$ with $\mu(0)=0$. This corresponds to the standard convexity preserving property stated in Theorem~\ref{thm:convex0}. 

If $\mu(0)\neq 0$, a solution of \eqref{ex4-ode1} satisfies
\begin{equation}\label{ex4-ode1-sol}
\frac{1}{\mu(t)}=\frac{1}{\mu(0)}+\frac{t}{a}\quad \text{in $(0, T)$.}
\end{equation}
We have different behavior of $\mu$, depending on the sign of $\mu(0)$. In the case of $\mu(0)>0$, we can take $T=\infty$ and $\mu\in C^1([0, \infty))$ is positive and decreasing and satisfies $\mu(t)\to 0$ as $t\to \infty$. In other words, preservation of strong convexity holds, while the convexity parameter (lower bound of spatial Hessian) decays as $t\to \infty$. A concrete example that illustrates this behavior is the case of \eqref{ex2-eq} with $n=1$, $\Omega=(-1, 1)$ and
\[
u_0(x)=-(1-x^2)^{\frac{1}{2}} \quad x\in [-1, 1]. 
\]
The unique viscosity solution to this problem is 
\[
u(x, t)=-((1+t)^2-x^2)^{\frac{1}{2}} \quad x\in [-1, 1],\ t\geq 0.
\]
This solution can also be understood from the geometric interpretation of the equation: the graph of $u(\cdot, t)$ evolves downward with unit normal velocity. Clearly, in $(-1, 1)$, $u(\cdot, t)$ stays strongly convex for all $t>0$ and $u_{xx}(\cdot, t)\to 0$ as $t\to \infty$. 

In the case $\mu(0)<0$, we obtain from \eqref{ex4-ode1-sol} 
\[
T=-\frac{a}{\mu(0)}, \quad \mu(t)\to -\infty \quad \text{as $t\to T-.$}
\]
Although $\mu(t)$ only represents a lower bound of $u_{xx}(\cdot, t)$, it suggests a possible loss of semiconvexity along the evolution. This phenomenon is also known to occur for initial value problems (without boundary conditions). A typical example of such solutions of \eqref{ex2-eq} with $\Omega=\R$ is 
\[
u(x, t)=\begin{cases}
    ((t-1)^2-x^2)^{\frac{1}{2}}, & \text{for $|x|\leq (1-t)/\sqrt{2}$},\ t\leq 1,\\
    \sqrt{2}(1-t)-|x|, & \text{for $|x|>(1-t)/\sqrt{2}$.}
\end{cases}
\]
Note that in this case $u(\cdot, 0)$ is semiconvex but $u_{xx}(0, t)\to -\infty$ as $t\to 1-$. 

\end{example}

We remark that it is possible to extend Theorem~\ref{thm:convex} to Hamiltonians depending also on $u$ under a corresponding variant of (H4$'$).  More precisely,
assume that there exist $T>0$ and a continuous function $m:\R\to\R$ such that
\begin{equation}\label{H-concave-u}
\begin{aligned}
\sum_{i=1}^{n+1}\lambda_i
 H(x_i,t,r_i,p+s(x_i-x))
\leq {}& H\left(x,t,
 \sum_{i=1}^{n+1}\lambda_i r_i-\frac{s}{2}Q,p\right)+m(s)Q,
\end{aligned}
\end{equation}
for all $s\in\R$, $x_1,\ldots,x_{n+1}\in\Oba$,
$r_1,\ldots,r_{n+1}\in\R$, $t\in[0,T)$ and $p\in\R^n$, where
\[
x=\sum_{i=1}^{n+1}\lambda_i x_i,\quad
Q=\sum_{i=1}^{n+1}\lambda_i|x_i|^2-|x|^2,
\quad
\lambda_i\geq0,\quad \sum_{i=1}^{n+1}\lambda_i=1.
\]
Then Proposition~\ref{prop:envelope} and Theorem~\ref{thm:convex} remain valid,
with $H\in C(\Oba\times[0,\infty)\times\R\times\R^n)$ and with (H1)--(H3)
and (H5) understood in their original $r$-dependent form.

The only difference in the proof occurs when the Hamiltonian inequalities at
the minimizing points are combined.  If $r_i=u(\hat y_i,\hat t)$ and
$s=\mu(\hat t)$, then \eqref{convex-proof eq1} gives
\[
\sum_{i=1}^{n+1}\lambda_i r_i-\frac{s}{2}Q(\hat y)
=u_\ast(\hat x,\hat t).
\]
Consequently, applying \eqref{H-concave-u} to the sum of the viscosity
inequalities yields the Hamiltonian
$H(\hat x,\hat t,u_\ast(\hat x,\hat t),\nabla\varphi(\hat x,\hat t))$,
which is exactly the term required for the supersolution inequality of the
generalized convex envelope.  The rest of the proof is unchanged.  When $H$
is independent of $u$, \eqref{H-concave-u} reduces to (H4$'$).

For the propagation of semiconvexity, condition
\eqref{H-concave-u} can be slightly relaxed by using {\rm (H1)}.
Indeed, suppose that, for $s\leq 0$,
\begin{equation}\label{H-concave-u-semi}
\sum_{i=1}^{n+1}\lambda_i
H(x_i,t,r_i,p+s(x_i-x))
\leq
H\left(x,t,\sum_{i=1}^{n+1}\lambda_i r_i,p\right)
+m(s)Q,
\end{equation}
where
\[
x=\sum_{i=1}^{n+1}\lambda_i x_i,
\qquad
Q=\sum_{i=1}^{n+1}\lambda_i|x_i|^2-|x|^2.
\]
By {\rm (H1)}, since $s\leq0$ and $Q\geq0$,
\[
\begin{aligned}
H\left(x,t,\sum_i\lambda_i r_i,p\right)
&\leq
H\left(
x,t,\sum_i\lambda_i r_i-\frac{s}{2}Q,p
\right)
+\frac{\gamma s}{2}Q.
\end{aligned}
\]
Consequently, \eqref{H-concave-u} holds for $s\leq0$ with
$m(s)$ replaced by
\[
\widetilde m(s)=m(s)+\frac{\gamma s}{2}.
\]
It follows that, as long as $\mu(t)\leq0$, the same generalized
convex-envelope argument yields propagation of semiconvexity provided
\[
\frac12\mu'(t)+m(\mu(t))
+\frac{\gamma}{2}\mu(t)\leq0.
\]
Thus, in the semiconvex regime, the dependence of $H$ on $u$ can be
incorporated directly through the properness condition {\rm (H1)}.

Regarding ordinary convexity result (with $\mu=0$) in Theorem~\ref{thm:convex0}, the stronger condition (H4$'$) is not needed. Taking $\mu\equiv0$ in the envelope argument and assuming only (H4), we obtain the following simpler version of Proposition~\ref{prop:envelope}.

\begin{prop}[Supersolution property of convex envelope]\label{prop:envelope0}
    Let $\Omega\subset \R^n$ be a bounded convex $C^2$ domain. Assume that $H\in C(\Oba\times (0, \infty)\times \R\times \R^n)$ satisfies (H4)(H5). For $T>0$, let $u\in LSC(\Oba\times (0, T))$ be a viscosity supersolution of \eqref{constraint-super}. Fix $\lambda=(\lambda_1, \lambda_2, \ldots, \lambda_{n+1})\in \R^{n+1}$ satisfying $\lambda_i> 0$ for all $i=1, \ldots n+1$ and $\sum_{i=1}^{n+1}\lambda_i=1$. Let $u_\ast$ be the spatial convex envelope of $u$ in $\Oba\times (0, \infty)$ associated to $\lambda$, defined by
\begin{equation}\label{envelope0}
u_\ast(x, t)=\inf\left\{\sum_{i=1}^{n+1}\lambda_i u(y_i, t): y_i\in \Oba, \ \sum_{i=1}^{n+1} \lambda_i y_i=x\right\}
\end{equation}
for all $x\in \Oba, t\in (0, T)$. Then, $u_\ast$ is a viscosity supersolution of \eqref{constraint-super}.
\end{prop}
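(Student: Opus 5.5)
The plan is to run the argument of Proposition~\ref{prop:envelope} again with $\mu\equiv0$, now allowing $H$ to depend on $u$ and using (H4) in place of (H4$'$).

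First, lower semicontinuity and attainment. Since $\Oba$ is compact and $u$ is lower semicontinuous, the infimum in \eqref{envelope0} is attained. If $(x_k,t_k)\to(x,t)$, we choose minimizers $y_i^k$ for $u_\ast(x_k,t_k)$ and pass to a subsequence with $y_i^k\to y_i$. The limit points still satisfy $\sum\lambda_i y_i=x$, so lower semicontinuity of $u$ gives $\liminf_k u_\ast(x_k,t_k)\ge u_\ast(x,t)$.

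Next, the contact setup. Take $\varphi\in C^1$ such that $u_\ast-\varphi$ has a strict local minimum at $(\hat x,\hat t)\in\Oba\times(0,T)$, with $\hat x$ possibly on $\partial\Omega$. Choose minimizers $\hat y_i\in\Oba$ with $\sum\lambda_i\hat y_i=\hat x$ and $u_\ast(\hat x,\hat t)=\sum\lambda_i u(\hat y_i,\hat t)$. Then
\[
\Phi(y_1,\ldots,y_{n+1},t)=\sum_i\lambda_i u(y_i,t)-\varphi\Bigl(\sum_i\lambda_i y_i,t\Bigr)
\]
has a local minimum over $\Oba^{n+1}\times(0,T)$ at $(\hat y,\hat t)$. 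The first order parabolic Crandall--Ishii lemma, applied in the relative (constrained) form to the functions $\lambda_i u$ on $\Oba$, gives numbers $b_i$ with
\[
\sum_i b_i=\varphi_t(\hat x,\hat t),\qquad \Bigl(\frac{b_i}{\lambda_i},\nabla\varphi(\hat x,\hat t)\Bigr)\in\overline{\mathcal P}^{1,-}_{\Oba}u(\hat y_i,\hat t).
\]
The spatial gradient is the same for every $i$ because $\nabla_{y_i}\varphi(\sum\lambda_j y_j,t)=\lambda_i\nabla\varphi$.

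The main obstacle is the boundary case, where some $\hat y_i\in\partial\Omega$. I handle it through the state-constraint characterization \eqref{sc-equiv}, which requires no Neumann alternative: every point of $\Oba$ carries the inequality $a+H\ge0$ on the closed relative subjet. Under the hypotheses here, $u$ is already assumed to be a constrained supersolution; in the application to the Neumann problem this is supplied by Lemma~\ref{lem:constraint} via (H5). Hence for each $i$,
\[
b_i/\lambda_i+H\bigl(\hat y_i,\hat t,u(\hat y_i,\hat t),\nabla\varphi(\hat x,\hat t)\bigr)\ge0.
\]

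Finally, combine the inequalities. Multiply the $i$-th inequality by $\lambda_i$ and sum. Then apply (H4) with $x_i=\hat y_i$, $r_i=u(\hat y_i,\hat t)$ and $p=\nabla\varphi(\hat x,\hat t)$. Since $\sum\lambda_i r_i=u_\ast(\hat x,\hat t)$, this yields
\[
0\le\varphi_t(\hat x,\hat t)+H\bigl(\hat x,\hat t,u_\ast(\hat x,\hat t),\nabla\varphi(\hat x,\hat t)\bigr),
\]
which is the constrained supersolution inequality for $u_\ast$ at $(\hat x,\hat t)$, whether $\hat x$ lies in $\Omega$ or on $\partial\Omega$.
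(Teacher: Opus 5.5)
Your proposal is correct and is essentially the paper's argument. The paper obtains this proposition by rerunning the proof of Proposition~\ref{prop:envelope} with $\mu\equiv0$, using the relative Crandall--Ishii lemma, the state-constraint inequality \eqref{sc-equiv} at each $\hat y_i$ (including boundary points), and (H4) with $r_i=u(\hat y_i,\hat t)$ so that $\sum_i\lambda_i r_i=u_\ast(\hat x,\hat t)$, exactly as you do; your remark that (H5) is not actually used here, since $u$ is already assumed to be a constrained supersolution, is also accurate.
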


The same argument with $\mu\equiv0$ and (H4) gives
Theorem~\ref{thm:convex0}. Notice that Theorem~\ref{thm:convex0} allows dependence of $H$ on $u$, since preservation of the $0$-convexity requires a simpler concavity structure for $H$.

\subsection{Constant nonhomogeneous Neumann problem}\label{sec:general neumann}

In this section, we show how changes in the Neumann condition will affect the convexity preserving property. 

We examine the special case when $g\equiv k$ for a given $k\in \R$. In other words, we consider the Neumann boundary condition of the form
\begin{equation}\label{gen neumann}
    \la \nabla u, \nu\ra=k \quad \text{on $\partial \Omega\times (0, \infty)$},
\end{equation}
 In this case, (H5) reduces to the following condition.
\begin{enumerate}
    \item[(H5$_k$)] For every $(x,t,r)\in\partial\Omega\times(0,\infty)\times\R$,
every $p\in\R^n$ with $\langle p,\nu(x)\rangle\geq k$, and every
$\alpha\geq0$,
\[
H(x,t,r,p+\alpha\nu(x))\geq H(x,t,r,p).
\]
\end{enumerate}
It is easily seen that $(H5_k)$ is weaker when $k$ is larger. Hence, our results above show that, for general Hamilton-Jacobi equations of the form
\[
u_t+ H(x, t, \nabla u)=0 \quad \text{in $\Omega\times (0, \infty)$}
\]
with the Neumann boundary condition \eqref{gen neumann}, spatial convexity of the solution 
is more easily preserved when $k\geq 0$ is larger, in line with intuition. The following example illustrates the role of $k$ in convexity preservation explicitly. 

\begin{example}\label{exa3}
    Let $\Omega=B_1(0)\subset \R^n$. Consider 
    \begin{equation}\label{ex3-eq1}
    u_t +||\nabla u|-1|=0 \quad \text{in $B_1(0)$}
    \end{equation}
    with initial data $u_0$ as in \eqref{ex1-sol} and the boundary condition 
    \begin{equation}\label{ex3-eq2}
    \la \nabla u, \nu\ra=k \quad \text{on $\partial B_1(0)$.}
    \end{equation}
    For any $k\geq 1$, the unique viscosity solution is given by $u(x, t) =u_0(x)$ for $(x,t)\in \ol{B_1(0)}\times[0,\infty)$, and clearly convexity is preserved.  

    In contrast, when $k\in [0, 1)$, the unique viscosity solution is 
    \begin{equation}\label{ex3-eq3}
    u(x, t)= \min\{|x|, k|x|-(1-k)(t-1)\}, \quad x\in \ol{B_1(0)},\ t\geq 0.    
    \end{equation}
    Although the initial value is convex, the convexity (and even semiconvexity) of $u(\cdot, t)$ is lost for $0<t<1$. Such difference reflects the dependence of (H5$_k$) on $k$. Note that the Hamiltonian for \eqref{ex3-eq1} satisfies (H5$_k$) for $k\geq 1$, but not for $k\in [0, 1)$. 
\end{example}

\subsection{Log-concavity preservation with Robin boundary conditions}
Convexity preserving property for \eqref{general eq1}\eqref{gen neumann} can be interpreted as log-concavity preservation for a class of transformed first order equations with a Robin-type boundary condition. 
Note that, under the transformation 
\begin{equation}\label{transform}
w=e^{-u} \quad \text{in $\Oba\times [0, \infty)$,}
\end{equation}
 $u$ is a viscosity solution (resp., subsolution, supersolution) \eqref{general eq1}\eqref{gen neumann} in the viscosity sense if and only if 
 $w$ is a positive viscosity solution (resp., supersolution, subsolution) of 
\begin{equation}\label{robin-eq}
w_t+ \hat{H}(x, t, w, \nabla w)=0 \quad \text{in $\Omega\times (0, \infty)$}. 
\end{equation}
and 
\begin{equation}\label{robin-cond}
    k w+\la \nabla w, \nu\ra= 0\quad \text{on $\partial \Omega\times (0, \infty)$,}
\end{equation}
where 
\begin{equation}\label{transform-H}
\hat{H}(x, t, r, p):=-r H\left(x, t, -\log r, -\frac{p}{r}\right), \quad (x, t, r, p)\in \Oba\times (0, \infty)\times (0, \infty)\times \R^n.
\end{equation}
As a result, if convexity preservation holds for \eqref{general eq1}\eqref{gen neumann}, then log-concavity of initial value is preserved for positive solutions to \eqref{robin-eq}\eqref{robin-cond}. Hence,
\begin{cor}
    Let $\Omega\subset \R^n$ be a bounded convex domain of $C^2$ class with outward unit normal $\nu$ to $\partial \Omega$. Let $k\geq 0$. Suppose that $H\in C(\Oba\times [0, \infty)\times \R\times  \R^n)$ satisfies (H1)--(H4) and (H5$_k$). Let $\hat{H}$ be defined by \eqref{transform-H}. 
    Let $w_0\in C(\Oba)$ and let $w\in C(\Oba\times [0, \infty))$ be the unique positive viscosity solution of \eqref{robin-eq}\eqref{robin-cond} with initial value \eqref{initial}. If $w_0$ is log-concave in $\Oba$, then $w(\cdot, t)$ is also log-concave in $\Oba$ for all $t\geq 0$. 
\end{cor}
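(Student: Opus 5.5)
The plan is to transfer Theorem~\ref{thm:convex0} through the transformation \eqref{transform}. Since $w$ is positive and continuous on $\Oba\times[0,\infty)$, set $u=-\log w$. Then $u\in C(\Oba\times[0,\infty))$ and $u(\cdot,0)=u_0:=-\log w_0$. Log-concavity of $w_0$ is exactly convexity of $u_0$, and log-concavity of $w(\cdot,t)$ is exactly convexity of $u(\cdot,t)$. It therefore suffices to show that $u$ is a viscosity solution of \eqref{general eq1}, \eqref{gen neumann}, \eqref{initial}, and then to invoke Theorem~\ref{thm:convex0} with $g\equiv k$. Under this choice (H5) is precisely (H5$_k$), and (H1)--(H4) are assumed.

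The key step is the equivalence of viscosity notions under $u=-\log w$. The map $r\mapsto-\log r$ is a smooth strictly decreasing diffeomorphism of $(0,\infty)$ onto $\R$. Suppose $\varphi\in C^1$ touches $u$ from below at $(x_0,t_0)$. Then $\phi:=e^{-\varphi}$ is a positive $C^1$ function touching $w$ from above at the same point, and every positive test function touching $w$ from above arises this way, since near the contact point it is positive. Thus local minima of $u-\varphi$ correspond to local maxima of $w-\phi$.

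We then compute $\phi_t=-\phi\varphi_t$ and $\nabla\phi=-\phi\nabla\varphi$, so that $\varphi_t=-\phi_t/\phi$ and $\nabla\varphi=-\nabla\phi/\phi$. At the contact point $\phi=w$, and hence
\[
\phi_t+\hat H(x_0,t_0,w,\nabla\phi)=-w\Bigl(\varphi_t+H(x_0,t_0,u,\nabla\varphi)\Bigr),
\qquad
kw+\la\nabla\phi,\nu\ra=-w\Bigl(\la\nabla\varphi,\nu\ra-k\Bigr).
\]
Since $w>0$, the supersolution inequalities for $u$, both interior and the boundary alternative ``max $\geq0$'', become exactly the subsolution inequalities for $w$, the boundary alternative ``min $\leq0$''. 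Symmetrically, subsolutions for $u$ correspond to supersolutions for $w$. Consequently, $w$ being a positive viscosity solution of \eqref{robin-eq}, \eqref{robin-cond} gives that $u$ is a viscosity solution of \eqref{general eq1}, \eqref{gen neumann}.

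We must also check that $u$ is the object to which Theorem~\ref{thm:convex0} applies. That theorem only requires $u$ to be a viscosity solution, and its proof uses comparison for the $u$-problem, which holds under (H1)--(H3). Uniqueness of $w$ is assumed but not needed for this direction.

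The main, though mild, obstacle is bookkeeping at boundary points. One must verify that the reversal of the min/max alternatives in the boundary viscosity conditions matches exactly, including that $\la\nabla\varphi,\nu\ra-k\leq0$ is equivalent to $kw+\la\nabla\phi,\nu\ra\geq0$ with the correct sign. The identity above settles this because the factor $-w$ is strictly negative.

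Finally, Theorem~\ref{thm:convex0} gives convexity of $u(\cdot,t)$ for all $t\geq0$, that is, log-concavity of $w(\cdot,t)=e^{-u(\cdot,t)}$.
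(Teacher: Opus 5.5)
Your proposal is correct and takes the same route as the paper. The paper also passes to $u=-\log w$, notes that positive viscosity sub/supersolutions of the Robin problem for $w$ correspond to super/subsolutions of \eqref{general eq1}, \eqref{gen neumann} for $u$, and then applies Theorem~\ref{thm:convex0} with $g\equiv k$, for which (H5) becomes (H5$_k$). Your explicit test-function computation, in which the factor $-w<0$ reverses both the interior and boundary alternatives, supplies the verification that the paper states without detail.
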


Under the assumptions (H1)--(H4) and (H5$_k$) on $H$, the new Hamiltonian $\hat{H}$ covers many examples. 

\begin{example}\label{exa4}
    Suppose that $H$ is given by 
\[
H(x, p)= a(x) |p|
\]
for $a\in C^{0,1}(\Oba)$, $a\geq 0$ in $\Oba$, we have 
\[
\hat{H}(x, p)= - a(x) |p|, \quad x\in \Oba,\ p\in \R^n.
\]
If in addition $a$ is assumed to be concave in $\Oba$, then by Theorem~\ref{thm:convex0} the convexity preserving property holds for 
\[
u_t+a(x) |\nabla u|=0 \quad \text{in $\Omega\times (0, \infty)$}
\]
with \eqref{gen neumann} for any $k\geq 0$. This in turn implies log-concavity preservation of positive solutions to 
\[
w_t-a(x) |\nabla w|=0 \quad \text{in $\Omega\times (0, \infty)$}
\]
associated with the Robin boundary condition \eqref{robin-cond} for all $k\geq 0$. 
\end{example}

\begin{example}\label{exa5}
    We can reinterpret Example~\ref{exa3} in terms of log-concavity preservation for a similar first order equation with a Robin boundary condition. Let $\Omega=B_1(0)\subset \R^n$ and $u\in C(\Oba\times [0, \infty))$ be the unique viscosity solution of \eqref{ex3-eq1}\eqref{ex3-eq2} with initial value $u_0$ in \eqref{initial} given by \eqref{ex1-sol}. By taking $w$ as in \eqref{transform}, we see that $w$ is the unique solution of 
    \[
    w_t-||\nabla w|-w|=0\quad \text{in $B_1(0)\times (0, \infty)$}
    \]
    satisfying Robin boundary condition \eqref{robin-cond} and initial value  
    \[
    w(x, 0)=e^{-|x|}, \quad x\in \ol{B_1(0)}. 
    \]
    In this new setting, Example~\ref{exa3} shows that log-concavity of $w$ is preserved when $k=1$ but is not preserved when $k=0$. In general, for $k\in [0, 1)$, the unique solution of the Cauchy-Robin problem is expressed as $w=e^{-u}$ with $u$ given by \eqref{ex3-eq3}. It clearly does not preserve log-concavity. 

    Interestingly, in this example we obtain the critical value $k=1$ for the Robin parameter, separating the regimes for preservation and loss of log-concavity along the evolution. This can be compared with results in \cite{ACH,CrFr6} for second order elliptic Robin problems, where log-concavity is established only when the Robin parameter is sufficiently large, with the threshold depending on the geometry and regularity of the domain. In contrast, in our first order example the critical value is determined only by the Hamiltonian.
\end{example}

\section{Further discussions}\label{sec:discussion}

\subsection{Convexity preservation depending on initial data}\label{sec:data-dependent}

Let us first make a remark regarding possible relaxation of (H5) in Theorem~\ref{thm:convex0}. 
The following example shows that the state-constraint reduction and (H5) are not necessary for convexity preservation with some initial values. 

\begin{example}
\label{exa:no-state-constraint}
Let $\Omega=(0,1)$ and consider the transport equation
\[
u_t+u_x=0
\quad \text{in }(0,1)\times(0,\infty),
\]
with homogeneous Neumann boundary conditions in the viscosity sense.
For the convex initial datum $u_0(x)=x^2$, $x\in (0, 1)$, 
the unique viscosity solution is
\[
u(x,t)=(x-t)_+^2,
\]
which is convex in $x$ for every $t\geq0$. However, $u$ is not a state-constraint supersolution. Indeed, for any
$t_0>0$, the function $\varphi(x,t)=-x$ touches $u$ from below at
$(0,t_0)$, while
\[
\varphi_t+H(\varphi_x)=-1<0,
\qquad H(p)=p.
\]
Therefore the state-constraint supersolution condition fails at $x=0$.
Notice also that (H5) fails there, since $\nu(0)=-1$ and, for $\alpha>0$, 
\[
H(p+\alpha\nu(0))=p-\alpha<p. 
\]
On the other hand, for the convex initial datum $u_0(x)=(1-x)^2$,
the solution is
\[
u(x,t)=\bigl(1-(x-t)_+\bigr)^2,
\]
which is not convex in $x$ for $0<t<1$. Thus, when (H5) is violated, convexity preservation may depend on the initial data. 
\end{example}

The above example suggests that a more general theory of convexity preservation involving initial data may be possible. Below we give a simple one-dimensional result showing that (H5) can be weakened when additional information on the initial data is available. We restrict ourselves to homogeneous Neumann boundary conditions and assume that the evolution preserves spatial monotonicity.

Let $\Omega=(0,1)$. We replace {\rm (H5)} by the following one-sided
condition:
\begin{equation}\label{one-side mono}
H(1,t,r,p+\alpha)\geq H(1,t,r,p)\quad \text{for all $(t,r)\in(0,\infty)\times\R$, $p\geq0$ and $\alpha\geq0$.}
\end{equation}
Notice that no corresponding monotonicity condition is imposed at $x=0$.

\begin{prop}\label{prop:data-dependent}
Let $\Omega=(0,1)$ and let $g=0$. Suppose that
$H\in C([0,1]\times[0,\infty)\times\R\times\R)$ satisfies
{\rm (H1)--(H4)} and \eqref{one-side mono}. Let $u\in C([0, 1]\times [0, \infty))$ be the unique 
viscosity solution of \eqref{general eq1}--\eqref{initial}.
Assume that $x\mapsto u(x, t)$ is 
nondecreasing in $[0,1]$ for all $[0, T)$ with $T>0$. If $u_0\in C([0,1])$ is convex and nondecreasing, then $u(\cdot, t)$ is convex in $[0,1]$ for every $t\in [0, T)$.
\end{prop}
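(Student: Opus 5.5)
The plan is to rerun the argument behind Theorem~\ref{thm:convex0}: Lemma~\ref{lem:constraint}, then Proposition~\ref{prop:envelope0}, then comparison. The difference is that at $x=0$ the state-constraint supersolution property will now come from monotonicity of $u$ instead of from (H5). Concretely, I would show that $u$ satisfies \eqref{constraint-super} in the viscosity sense on $[0,1]\times(0,T)$.

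\textbf{The endpoint $x=1$.} Here $\nu=1$ and $g=0$, and \eqref{one-side mono} is exactly (H5) at this point. The proof of Lemma~\ref{lem:constraint} is local near the contact point $(\hat x,\hat t)$, so it applies verbatim with $G_\vep(x)=-\vep d(x)$. When $y_\vep$ is on the boundary it lies near $1$, where \eqref{one-side mono} is used. When $x_\vep$ is on the boundary we have $\langle\nabla\psi,\nu\rangle<0$ there, which forces the equation alternative.

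\textbf{The endpoint $x=0$.} This is the key new step. Let $\varphi\in C^1$ be such that $u-\varphi$ has a strict local minimum at $(0,t_0)$, and write $p=\varphi_x(0,t_0)$.
\begin{itemize}
\item If $p<0$, the Neumann alternative $\langle\nabla\varphi,\nu\rangle-0=-p\ge0$ is useless. Instead, since $u(\cdot,t)$ is nondecreasing, the pair $(\varphi_t(0,t_0),0)$ still lies in the subjet of $u$ at $(0,t_0)$. Indeed, $u(x,t)\ge u(0,t)\ge u(0,t_0)+\varphi(0,t)-\varphi(0,t_0)$ by the minimum along $x=0$.
\item So it suffices to show that $(a,p)\in\overline{\mathcal P}^{1,-}_{[0,1]}u(0,t_0)$ with $p\ge0$ gives $a+H(0,t_0,u,p)\ge0$. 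Then one would still need to pass from the gradient $0$ back to the original $p<0$. This would require $H(0,\cdot,\cdot,p)\ge H(0,\cdot,\cdot,0)$ for $p<0$, which is not assumed. So this route is wrong.
\end{itemize}

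\textbf{Better route.} Avoid needing the constraint property for arbitrary test functions. Only subjets that actually arise in the envelope argument matter. In Proposition~\ref{prop:envelope0} the relevant subjets at $\hat y_i=0$ have gradient $p=\varphi_x(\hat x,\hat t)$, which is common to all $i$ since $\mu=0$. By (H4), $u_\ast$ is convex in $x$, and it is nondecreasing as an infimum of averages of nondecreasing functions. If $\hat x>0$, there is a contact point $\hat y_i=0$, and by convexity of $u_\ast$ we get $p\ge$ the left derivative at $\hat x$. That derivative is $\ge0$ by monotonicity. If $\hat x=0$, all $\hat y_i=0$ and $u_\ast=u$ near the contact, so the Neumann definition applies directly.

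So I only need: $u$ is a supersolution at $x=0$ for jets with $p\ge0$. For $p>0$ this follows from the doubling argument of Lemma~\ref{lem:constraint} near $0$. There $\nu=-1$, and $\langle\nabla\psi,\nu\rangle=-\nabla\psi<0$ holds once the gradients are near $p>0$. So the Neumann alternative is excluded, and if $y_\vep=0$, the normal correction $c_\vep\nu$ only decreases $p$ toward $q$. Here I would use a refined argument, adding $-\delta x$ perturbations to keep gradients positive. The case $p=0$ follows by approximation, using (H3) and the closure of subjets. With this restricted constraint property, the envelope computation goes through. Then $u_\ast$ is a Neumann supersolution, and comparison gives $u_\ast=u$.

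\textbf{Main obstacle.} The main difficulty is the bookkeeping at $x=0$: making rigorous that only nonnegative gradients appear there, including limiting jets, and handling $p=0$ by approximation.
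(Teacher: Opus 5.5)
\textbf{Verdict: there is a genuine gap at the case $p=0$.} Your overall plan is viable and differs from the paper's. It has four parts: the full state-constraint property at $x=1$ via Lemma~\ref{lem:constraint} and \eqref{one-side mono}, a one-sided constraint property at $x=0$ for slopes $p\ge0$, then summation with (H4), then comparison. However, the one step that carries the content of the proposition is not proved. That step is a point $\hat y_i=0$ realizing the envelope with common slope $p=0$.

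For $p>0$ nothing is needed. The Neumann alternative $-p<0$ is excluded directly by the definition, also for elements of the closed subjet, since the approximating slopes are eventually positive. The doubling detour is superfluous. For $p=0$, however, the alternative $-p\ge0$ holds with equality, so the definition gives nothing. ``Approximation using (H3) and the closure of subjets'' cannot repair this, for three reasons:
\begin{itemize}
\item At the left endpoint, $(a,p)\in\mathcal P^{1,-}_{[0,1]}u(0,t)$ implies $(a,p')\in\mathcal P^{1,-}_{[0,1]}u(0,t)$ only for $p'\le p$. So you cannot move to positive slopes at the same point.
\item An element $(a,0)\in\overline{\mathcal P}^{1,-}_{[0,1]}u(0,\hat t)$ may be a limit of boundary jets $(a_j,p_j)$ with $p_j<0$. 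For these the definition only yields the Neumann alternative, so the closure works against you.
\item (H3) only helps once you already have an inequality at some nearby slope.
\end{itemize}

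\textbf{Two ways to close the gap.} The paper avoids any boundary analysis when $p=0$.
\begin{itemize}
\item Since $\varphi(\cdot,\hat t)$ touches the convex function $u_\ast(\cdot,\hat t)$ from below with zero slope, $\hat x$ minimizes $u_\ast(\cdot,\hat t)$, so $u_\ast(\hat x,\hat t)=u(0,\hat t)$.
\item Monotonicity then forces $u(\hat y_i,\hat t)=u(0,\hat t)$ for all $i$. Since some $\hat y_i\ge\hat x$, also $u(\hat x,\hat t)=u_\ast(\hat x,\hat t)$.
\item Hence $\varphi$ touches $u$ itself at the interior point $\hat x$, and the interior inequality gives exactly what is needed.
\end{itemize}

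Alternatively, your route can be completed by a genuine perturbation argument.
\begin{itemize}
\item Suppose $\varphi$ realizes $(a,0)$ at $(0,s)$. A minimizer $(x_\delta,t_\delta)$ of $u-\varphi-\delta x+Kx^2+|t-s|^2$ near $(0,s)$ satisfies $x_\delta\le\delta/K$.
\item If $x_\delta=0$, then $t_\delta=s$ and the test slope is $\delta>0$. If $x_\delta>0$, the point is interior. So the interior inequality holds in every case, and letting $\delta\to0$ gives $a+H(0,s,u(0,s),0)\ge0$.
\item To pass to closed subjets you then need monotonicity of $u$. A boundary jet $(a_j,p_j)$ with $p_j<0$ can be replaced by $(a_j,0)$, which is exactly your first-bullet observation.
\end{itemize}

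\textbf{Smaller points.}
\begin{itemize}
\item Convexity of $u_\ast$ comes from taking the full convex envelope, not from (H4). In any case $p\ge0$ follows from monotonicity of $u_\ast$ alone.
\item At $\hat x=0$ it is not true that $u_\ast=u$ near the contact point. What holds is $u_\ast(0,\cdot)=u(0,\cdot)$ and $u_\ast\le u$, which suffices.
\item The endpoint $\hat x=1$, which you omit, needs the same remark.
\end{itemize}
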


\begin{proof}
Let $u_\ast(\cdot,t)$ denote the spatial convex envelope of
$u(\cdot,t)$ for all $t\in [0, T]$.  We claim that $u_\ast$ is a viscosity supersolution of the original
Neumann problem restricted to the time interval $(0, T)$. Let $\varphi$ touch $u_\ast$ from below at
$(\hat x,\hat t)$ with $\hat x\in(0,1)$ and $\hat{t}\in (0, T)$, and let $p=\varphi_x(\hat x,\hat t)$. Since $u_\ast(\cdot,\hat t)$ is convex and nondecreasing, we have $p\geq0$. Suppose that
\[
\hat x=\sum_i\lambda_i\hat y_i,
\qquad
u_\ast(\hat x,\hat t)
=\sum_i\lambda_i u(\hat y_i,\hat t),
\]
with $\lambda_i>0$ and $\sum_i\lambda_i=1$.

As in the proof of Proposition~\ref{prop:envelope0}, the Crandall-Ishii lemma
gives, at every point $\hat y_i$ realizing the convex envelope, a relative
subjet of $u$ whose spatial component is the same number $p$.
If $\hat y_i\in(0,1)$, the interior supersolution inequality applies.
If $\hat y_i=0$ and $p>0$, then the boundary term in the Neumann
supersolution condition is $-p<0$, and therefore the interior equation must again hold.

We argue separately for the case $p=0$. By spatial monotonicity of $u$, we get $u_\ast(x,\hat t)\geq u(0,\hat t)$ for all $x\in[0,1]$ 
and equality holds at $x=0$. Since $\varphi_x(\hat x,\hat t)=0$ and
$\varphi(\cdot,\hat t)$ touches the convex function
$u_\ast(\cdot,\hat t)$ from below at $\hat x$, $\hat x$ must be a minimizer of $u_\ast(\cdot,\hat t)$ and thus $u_\ast(\hat x,\hat t)=u(0,\hat t)$.

For the points $\hat y_i$ realizing the convex envelope, using the monotonicity of 
$u(\cdot,\hat t)$ again, we have $u(\hat y_i,\hat t)\geq u(0,\hat t)$
for every $i$. The relation $u_\ast(\hat x,\hat t)=u(0,\hat t)$ therefore implies $u(\hat y_i,\hat t)=u(0,\hat t)$ for every $i$. Since $\hat x\in (0, 1)$, at least one $\hat y_i\geq\hat x$,
and hence
\[
u(0,\hat t)\leq u(\hat x,\hat t)
\leq u(\hat y_i,\hat t)=u(0,\hat t).
\]
It follows that $u(\hat x,\hat t)=u_\ast(\hat x,\hat t)$. Since $u_\ast\leq u$, the function $\varphi$ also touches $u$ from
below at $(\hat x,\hat t)$. As $\hat x\in(0,1)$, the interior
supersolution condition gives
\[
\varphi_t(\hat x,\hat t)
+H(\hat x,\hat t,u_\ast(\hat x,\hat t),0)\geq0.
\]

Let us consider the case when $\hat y_i=1$. In this case, condition \eqref{one-side mono} allows us to repeat the
argument of Lemma~\ref{lem:constraint} only at the right endpoint and
only for nonnegative slopes. We therefore obtain the interior
supersolution inequality at $\hat y_i$ also in this case. Consequently, the same summation argument as in the proof of
Proposition~\ref{prop:envelope0}, together with {\rm (H4)}, shows that
\[
\varphi_t(\hat x,\hat t)
+H(\hat x,\hat t,u_\ast(\hat x,\hat t),p)\geq0.
\]
Thus $u_\ast$ satisfies the supersolution inequality in the interior.

It remains to consider the boundary points. Since
$u_\ast(0,t)=u(0,t)$ and $u_\ast(1,t)=u(1,t)$, the viscosity Neumann
supersolution condition follows directly from that of $u$, together with
the monotonicity and convexity of $u_\ast(\cdot,t)$. Hence $u_\ast$ is a viscosity supersolution of the original Neumann problem.

Since $u_\ast(\cdot,0)=u_0.$ thanks to the convexity of $u_0$, 
the comparison principle gives $u_\ast\geq u$ in $[0, 1]\times [0, T)$. By definition $u_\ast\leq u$. Therefore, $u_\ast=u$, and $u(\cdot,t)$ is convex for every $t\in [0, T)$.
\end{proof}

Condition \eqref{one-side mono} is weaker than {\rm (H5)}. However, in the present one-dimensional setting, convexity preservation is obtained only on the time interval $[0,T)$ during which $u$ remains nondecreasing in the spatial variable. It would be interesting to develop a more systematic theory of convexity preservation for the Neumann problem for general Hamilton-Jacobi equations in higher dimensions, identifying conditions on the initial value.

\subsection{Convexity preservation for parabolic equations}\label{sec:parabolic}

We conclude with a simple one-dimensional observation for second order
parabolic equations.  Its purpose is not to give a general parabolic theory,
but to indicate a contrast with the first order mechanism developed above.
For a genuinely parabolic equation, the boundary condition can impose an
additional compatibility requirement on convex initial data.  The following
example shows that, for a constant nonhomogeneous Neumann condition, a natural
one-sided compatibility condition is sufficient to propagate convexity.

The following one dimensional case provides a simple example illustrating such behavior. We assume that the domain is $(0, 1)\subset \R$. Let $k>0$ and $F:\R\to\R$ be a nonincreasing continuous function such that $F(s)=0$ if and only if $s=0$. Consider
\begin{numcases}{}
    u_t+F(u_{xx})=0 &\quad \text{in $(0, 1)\times (0, \infty)$,}\label{1d-para eq1}\\
     u_x(0, t)=-k, \quad u_x(1, t)=k &\quad \text{for $t\in (0, \infty)$,}\label{1d-para eq2}\\
    u(\cdot, 0)=u_0 &\quad \text{in $[0, 1]$,}\label{1d-para initial}
\end{numcases}
where $u_0\in C^1([0,1])$ is a given initial value. In this setting, the convexity-preserving property can be readily observed when $u_0$ satisfies 
\begin{equation}\label{compatibility}
    (u_0)_{x}(0)\geq -k,  \quad (u_0)_x(1)\leq k. 
\end{equation}
\begin{prop}
Let $k>0$. Let $F\in C(\R)$ be a nonincreasing function such that $F(s)=0$ if and only if $s=0$. Suppose that  $u_0\in C^1([0,1])$ is a convex function such that \eqref{compatibility} holds. 
Let $u$ be a unique viscosity solution of \eqref{1d-para eq1}--\eqref{1d-para initial}. Then, $u(\cdot, t)$ is convex in $[0, 1]$ for all $t\geq 0$.
\end{prop}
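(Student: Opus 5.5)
Our plan is to prove that the convex envelope of $u(\cdot,t)$ is a supersolution and then apply comparison, as in the first order case. For $t\geq0$ let $u_\ast(\cdot,t)$ be the spatial convex envelope of $u(\cdot,t)$ on $[0,1]$. By convexity of $u_0$ we have $u_\ast(\cdot,0)=u_0$, and $u_\ast\leq u$. We aim to show that $u_\ast$ is a viscosity supersolution of \eqref{1d-para eq1}--\eqref{1d-para eq2}. The comparison principle for this degenerate parabolic Neumann problem then gives $u_\ast\geq u$, hence $u=u_\ast$ is convex.

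\emph{Interior points.} Let $\varphi$ be $C^2$ and touch $u_\ast$ from below at $(\hat x,\hat t)$ with $\hat x\in(0,1)$.

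1. Since $u_\ast(\cdot,\hat t)$ is convex and lies above $\varphi(\cdot,\hat t)$ near $\hat x$, with equality at $\hat x$, we cannot directly conclude $\varphi_{xx}(\hat x,\hat t)\geq0$. However, we only need $\varphi_t+F(\varphi_{xx})\geq0$, and $F$ is nonincreasing. So we split into two cases.

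2. \emph{Case $u_\ast(\hat x,\hat t)<u(\hat x,\hat t)$.} Then $\hat x$ lies in the interior of an interval $[a,b]$ on which $u_\ast(\cdot,\hat t)$ is affine, with $u_\ast=u$ at $a$ and $b$. We may replace $\varphi$ by the function $\varphi+\delta|x-\hat x|^2$ restricted in $x$; since $u_\ast(\cdot,\hat t)$ is affine near $\hat x$, we get $\varphi_{xx}(\hat x,\hat t)\leq0$, hence $F(\varphi_{xx})\geq0$. It then remains to show $\varphi_t(\hat x,\hat t)\geq0$, which is the same as saying that $u_\ast$ is nondecreasing in time along affine pieces. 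This is where the endpoints enter. The case $\lambda$ is fixed, with $\hat x=\lambda_1 y_1+\lambda_2 y_2$ and $u_\ast=\sum\lambda_i u(y_i,\hat t)$. We would apply the Crandall--Ishii lemma as in Proposition~\ref{prop:envelope0}, in its parabolic second order version, which gives $b_1+b_2=\varphi_t$ together with second order jets $(b_i/\lambda_i,p,X_i)$ with $\sum\lambda_i^{-1}\ldots$. For two points in one dimension, the matrix inequality can be arranged so that $\lambda_1X_1,\lambda_2X_2$ are related to $\varphi_{xx}$ in the manner of \cite{ALL}.

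\emph{Handling the points $y_i$.} At an interior point $y_i$, the equation gives $b_i/\lambda_i\geq -F(X_i)$. At a boundary point $y_i=0$, we have $p=\varphi_x$, and this slope is at least the left endpoint slope of the convex envelope. The key claim is that $u_x(0,t)\geq -k$ is propagated, i.e. $p\geq -k$. Indeed, the barrier $u_0(0)-kx$ combined with comparison shows that the slope of $u$ at $0$ stays at least $-k$; this uses the compatibility condition \eqref{compatibility}. If $p>-k$, the Neumann alternative $-p-k\geq0$ fails, so the interior equation must hold at $0$. If $p=-k$ exactly, we perturb: we replace $\varphi$ by $\varphi+\delta x$, or equivalently work with a slightly tilted envelope, and let $\delta\to0$. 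The endpoint $1$ is handled symmetrically, using $p\leq k$. Summing the inequalities and using that $F$ is nonincreasing (the analogue of (H4) for this $x$-independent operator) yields $\varphi_t+F(\varphi_{xx})\geq0$.

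\emph{Boundary points of $u_\ast$.} At $\hat x=0$ we have $u_\ast(0,t)=u(0,t)$. A test function $\varphi$ touching $u_\ast$ from below also touches $u$ from below, so the Neumann supersolution alternative transfers directly from $u$.

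\emph{Main obstacle.} The hardest step is the boundary analysis at contact points $y_i\in\{0,1\}$: showing that the relative subjet slope satisfies $p\geq-k$ at $0$ (respectively $p\leq k$ at $1$), and handling the borderline equality. Here we would first try the barrier and comparison argument, which is exactly what the compatibility condition \eqref{compatibility} supplies. The barrier sends us to an interior equation only when the inequality is strict, so for the equality case we would use the tilt perturbation.
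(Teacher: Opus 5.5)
Your envelope route differs from the paper's, and it has a genuine gap at the step you flag as the main obstacle.

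\textbf{The gap: the boundary slope bound is not established.} Comparing $u$ with the stationary subsolution $u_0(0)-kx$ only yields $u(x,t)\geq u_0(0)-kx$. That bound is anchored at the \emph{initial} boundary value. To get $p\geq-k$ when the affine piece of $u_\ast(\cdot,\hat t)$ through $\hat x$ starts at $y_1=0$, you need the two-point inequality $u(x,\hat t)\geq u(0,\hat t)-kx$ on $[0,1]$. But $u(0,t)$ generally moves: for $F(s)=-s$ and $u_0=k(x-\frac12)^2$ the solution is $k(x-\frac12)^2+2kt$, so your bound misses the required one by $2kt$.

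Without $p>-k$, the Neumann alternative $-p-k\geq0$ may be the one satisfied at $(0,\hat t)$. Then $b_1$ is unconstrained and the summation yields nothing. This case is not an artifact. An affine function of slope $<-k$ is not even a viscosity subsolution at $x=0$. For non-compatible convex data such as $u_0=-2kx$, the heat flow immediately loses convexity near $x=0$. So any correct argument must carry \eqref{compatibility} to every positive time.

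The tilt $\varphi+\delta x$ does not handle $p=-k$ either. The tilted function no longer touches $u_\ast$ at $(\hat x,\hat t)$, and at the displaced minimum the envelope slope is again uncontrolled. Proving $u(x,t)\geq u(0,t)-kx$ and $u(x,t)\geq u(1,t)-k(1-x)$ for all $t$ is a propagation statement of the same depth as the proposition itself.

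\textbf{The missing idea: use compatibility to get monotonicity in time.} Convexity of $u_0$ gives $F((u_0)_{xx})\leq0$ in the viscosity sense. The condition \eqref{compatibility} gives the Neumann subsolution inequalities at $x=0,1$ for relative superjets, since their slopes are $\geq (u_0)_x(0)\geq -k$ at $0$ and $\leq (u_0)_x(1)\leq k$ at $1$. So $u_0$ is a stationary subsolution, and comparison yields $u_0\leq u(\cdot,s)$. Comparing $u$ with its time translate $u(\cdot,\cdot+s)$ then gives $u(\cdot,t)\leq u(\cdot,t+s)$. Thus $u_t\geq0$, hence $F(u_{xx})\leq0$. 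Because $F$ is nonincreasing and vanishes only at $0$, this gives $u_{xx}\geq0$ in the viscosity sense, and \cite[Lemma 1]{ALL} gives convexity. This is the paper's proof; it needs no envelope and never meets boundary contact points.

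\textbf{A smaller point on your interior step.} You should not aim at $\varphi_t\geq0$, because the Crandall--Ishii matrices $X_i$ can be negative. When $\varphi_{xx}\leq0$, the diagonal of the matrix inequality gives $X_i\geq\lambda_i\varphi_{xx}-O(\varepsilon)\geq\varphi_{xx}-O(\varepsilon)$. Together with monotonicity of $F$, this yields $\varphi_t+F(\varphi_{xx})\geq0$. The harmonic-mean relation of \cite{ALL} by itself would require concavity of $s\mapsto F(1/s)$, which is not assumed.
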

\begin{proof}
It is straightforward to verify that $u_0$ is a stationary subsolution of \eqref{1d-para eq1}--\eqref{1d-para initial}. By the comparison principle, we obtain $u_0(x)\leq u(x, t)$ for all $x\in [0, 1]\times [0, \infty)$. Note that for any $s\geq 0$, $(x, t)\mapsto u(x, t+s)$ is also a viscosity solution of \eqref{1d-para eq1}\eqref{1d-para eq2} with initial value $u(\cdot, s)$. Using the comparison principle again, we deduce $u(x, t)\leq u(x, t+s)$ for all $x\in (0, 1)$ and $t, s\geq 0$. 

Consequently, $u$ satisfies $u_t\geq 0$ in $[0, 1]\times (0, \infty)$ in the viscosity sense, which implies that $F(u_{xx}(\cdot, t))\leq 0$ holds in $(0, 1)$  for each $t>0$ also in the viscosity sense. Since $F$ is nonincreasing and $0$ is the only zero of $F$, we see that $-u_{xx}(\cdot, t)\leq 0$ in $(0, 1)$ holds in the viscosity sense for all $t>0$. By \cite[Lemma 1]{ALL}, this concludes the convexity of $u(\cdot, t)$ in $[0, 1]$ for all $t>0$. 
\end{proof}

\end{document}